\documentclass[11pt]{article}

\usepackage{amsfonts}
\usepackage{amscd}
\usepackage{amssymb}
\usepackage{amsthm}
\usepackage{amsmath, xspace}
\usepackage{blkarray}
\usepackage{cancel}
\usepackage{color}
\usepackage{graphics}
\usepackage{graphicx}
\usepackage{enumitem}
\usepackage{fancyhdr}
\pagestyle{fancy}
\usepackage{mathdots}
\usepackage{mathrsfs}
\usepackage{multicol}
\usepackage{stmaryrd}
\usepackage{ytableau}
\usepackage[all]{xy}

\usepackage[plainpages,backref]{hyperref}

\usepackage{lscape}

\theoremstyle{plain}
\newtheorem{thm}{Theorem}[section]

\newtheorem{prop}[thm]{Proposition}
\newtheorem{cor}[thm]{Corollary}
\theoremstyle{definition}
	
\newtheorem{remark}[thm]{Remark}
\newtheorem{example}{Example}[section]

\theoremstyle{remark}

\numberwithin{equation}{section}

\setlength{\evensidemargin}{1in} 
\addtolength{\evensidemargin}{-1in}
\setlength{\oddsidemargin}{1in} 
\addtolength{\oddsidemargin}{-1in} 
\setlength{\topmargin}{1in} 
\addtolength{\topmargin}{-1.5in}

\setlength{\textwidth}{17cm} \setlength{\textheight}{23cm}
\setlength{\headheight}{13.75pt}

\providecommand{\keywords}[1]{\textbf{\textit{Key words---}} #1}

\def\CC{\mathbb{C}}

\def\ZZ{\mathbb{Z}}

\def\ev{\mathrm{ev}}

\def\wt{\mathrm{wt}}

\usepackage{tikz}
	\usepgflibrary[patterns] 
	\usetikzlibrary{patterns} 
	\usepgflibrary{shapes.geometric}
	
\tikzstyle{D}=[draw, fill =black, circle, inner sep=0pt, minimum size=.5pt]
\DeclareMathSymbol{\sm}{\mathbin}{AMSa}{"39}

\newcounter{r}
\newcounter{s}

\newcommand\Part[1]{
        \setcounter{r}{1}
	 \foreach \x in {#1}{
 	{\ifnum\value{r}=1
		\draw (0,\value{r}-1)--(\x,\value{r}-1); 
		\fi}
	\draw (0,\value{r}) to (\x,\value{r});
   	\foreach \y in {0, ..., \x} {\draw (\y,\value{r})--(\y,\value{r}-1);}
	\addtocounter{r}{1}
 }}
 



\newcommand\Tableau[1]{
        \foreach \x [count = \c from 1] in {#1} {
		\foreach \y [count = \d from 1] in \x{
			\node at (\d-.5,\c-.5) {\scriptsize$\y$}; 
			\draw (\d,\c) to (\d,\c-1);
			{\ifnum\d=1
				\draw (0,\c) to (0,\c-1);
				\fi}
			\setcounter{r}{\d}
		}
		{\ifnum\c=1
			\draw (0,0)--(\value{r},0);
			\fi}
		\draw(0,\c) to (\value{r},\c);
		\setcounter{s}{\c}}}
		

\newcommand\sTableau[1]{
        \foreach \x [count = \c from 1] in {#1} {
		\foreach \y [count = \d from 1] in \x{
			\node at (\d-.5,\c-.5) {\tiny$\y$}; 
			\draw (\d,\c) to (\d,\c-1);
			{\ifnum\d=1
				\draw (0,\c) to (0,\c-1);
				\fi}
			\setcounter{r}{\d}
		}
		{\ifnum\c=1
			\draw (0,0)--(\value{r},0);
			\fi}
		\draw(0,\c) to (\value{r},\c);
		\setcounter{s}{\c}}}

\makeatletter
\renewcommand{\@makefnmark}{\mbox{\textsuperscript{}}}
\makeatother

\title{Monk rules for type $GL_n$ Macdonald polynomials}
\author{
Tom Halverson \quad\ \ email:\ halverson@macalester.edu \\
Arun Ram\quad\ \ email:\ aram@unimelb.edu.au \\
\\
}
\date{}

\lhead{Monk rules for Macdonald polynomials, Halverson-Ram}
\rhead{}
\usetikzlibrary{arrows.meta}

\begin{document}

\maketitle

\vspace{-3em}
\begin{center}
{\sl In memory of Georgia Benkart}
\end{center}


\begin{abstract}
\noindent
In this paper we give Monk rules for Macdonald polynomials which are
analogous to the Monk rules for Schubert polynomials.
These formulas are similar to the formulas given by Baratta \cite{Ba08},
but our method of derivation is to use Cherednik's interwiners.  Deriving
Monk rules by this technique addresses the relationship
between the work of Baratta and the product formulas of Yip \cite{Yi10}.
Specializations of the Monk formula's at $q=0$ and/or $t=0$ provide Monk
rules for Iwahori-spherical polynomials and for finite and affine key polynomials.
\end{abstract}

\keywords{Macdonald polynomials, Schubert calculus}
\footnote{AMS Subject Classifications: Primary 05E05; Secondary  33D52.}

\setcounter{section}{-1}

\section{Introduction}

In this paper, we use the term \emph{electronic Macdonald polynomials} for what are commonly called
`nonsymmetric' Macdonald polynomials in the literature (see \cite{CR22} for motivation for this terminology).
The (type $GL_n$) electronic Macdonald polynomials $\{ E_\mu \ |\ \mu\in \ZZ^n\}$
form a $\CC$-basis for the ring $\CC[x_1^{\pm1}, \ldots, x_n^{\pm1}]$.
The goal of this paper is to give Monk type rules for the products
$$x_j E_\mu
\qquad\hbox{and}\qquad
(x_1+\cdots+x_j)E_\mu
\qquad\hbox{and}\qquad
E_{\varepsilon_j}E_\mu,
$$
$$x^{-1}_j E_\mu
\qquad\hbox{and}\qquad
(x^{-1}_j+\cdots+x^{-1}_n)E_\mu
\qquad\hbox{and}\qquad
E_{-\varepsilon_j}E_\mu,
$$
expanded in terms of electronic Macdonald polynomials
(here $\varepsilon_j = (0,\ldots, 0, 1, 0, \ldots, 0)$ is the $n$-tuple with $1$ in the $j$th
entry and all other entries 0).
We derive our formulas by viewing  multiplication by 
$x_j$,  multiplication by $(x_1+\cdots+x_j)$, multiplication by $E_{\varepsilon_j}$ etc.
as operators on the ring $\CC[x_1^{\pm}, \ldots, x_n^{\pm1}]$. 
Expanding a product like $x_jE_\mu$ in terms of electronic Macdonald polynomials is equivalent to writing
the operator of multiplication by $x_j$ in terms of intertwiners and Cherednik-Dunkl operators.
The expression of the operator $x_j$ in terms of intertwiners and Cherednik-Dunkl operators
can be viewed as a \emph{universal formula} for multiplication by $x_j$ in the basis of
electronic Macdonald polynomials.  These universal formulas are given in Theorem \ref{Monkops}.

To obtain the explicit expansions of the products above it is then necessary to (carefully)
``evaluate'' the universal formula at $\mu$.  These explicit expansions of the products are given in
Theorem \ref{Monkthm}.

Letting $e_1 = x_1+\cdots+x_n$ and $e_{n-1} = x_1\cdots x_n(x^{-1}_1+\cdots +x^{-1}_n)$ be the 
first and $(n-1)$st elementary symmetric functions,
Baratta \cite[Prop.\ 7, Prop.\ 8]{Ba08} gives formulas for
\begin{align*}
x_jE_\mu,
\qquad
e_1 E_\mu
\qquad\hbox{and}\qquad
e_{n-1} E_\mu
\end{align*}
expanded in terms of electronic Macdonald polynomials.  
Baratta indicates that the formula for $x_jE_\mu$ also appears in Lascoux  \cite{La08}.
The formulas of Baratta must be the same as ours, although unwinding
and comparing the notations is not immediate (at least for us).
In \cite{Ba10} Baratta computes the products
$e_r E_\mu$ where $e_r$ denotes the $r$th elementary symmetric function.
It might be possible to give alternate derivations
of the products $e_r E_\mu$ for general $r$ using the methods of this paper.  

Baratta's approach is similar to that of Lascoux, using the interpolation Macdonald polynomials
and results of Knop and Sahi \cite{Kn96} and \cite{Sa96}.
Our approach uses the intertwiners and their relation with the Cherednik-Dunkl operators.
Computing these formulas via intertwiners addresses the relationship between the formulas 
of Baratta and the methods of Yip \cite{Yi10},
who gives some related expansions, but in an alcove walk form.

The motivation for the term ``Monk rules'' comes from Schubert calculus.
In Schubert calculus, Monk's rules for the Schubert polynomials $\mathfrak{S}_w$ are
$$x_j \mathfrak{S}_w 
= \Big(\!\!\!\!\sum_{1\le i<j\atop \ell(ws_{ij})=\ell(w)+1} \mathfrak{S}_{ws_{ij}}\Big)
- \Big(\!\!\!\!\sum_{j<i\le n\atop \ell(ws_{ji})=\ell(w)+1} \mathfrak{S}_{ws_{ji}}\Big)
\qquad\hbox{and}\qquad
\mathfrak{S}_{s_{r,r+1}} \mathfrak{S}_w 
= \sum_{i\le r< j\atop \ell(ws_{ij})= \ell(w)+1} \mathfrak{S}_{ws_{ij}}.
$$
(here $w$ is a permutation in the
symmetric group $S_n$ and $s_{ij}$ denotes the transposition which switches $i$ and $j$).
These rules are proved in \cite[(4.15),($4.15'$),($4.15''$)]{Mac91}.
A compendium of similar formulas for type $GL_n$ Grothendieck polynomials is given in 
\cite[\S1,2, \S1.3]{LS04}.  Though the analogies are tantalizing, we have not, in any generality, 
made a concrete connection between our `Monk formulas' for Macdonald poylnomials 
and the formulas which appear in Schubert calculus.  Part (c) of Corollary \ref{spclzdMonk}
provides a different formulation and proof of \cite[Theorem 3.3.6]{AQ19}.  
Other formulas related to Corollary \ref{spclzdMonk} appear in Assaf \cite{As21}
and Gibson \cite{Gib19}
who, respectively, use the combinatorics of Kohnert diagrams and monomial crystals.

\smallskip\noindent
\textbf{Acknowledgements.}  We are very grateful to Zajj Daugherty for tikzing the picture in \eqref{rotmuCpic}.

\section{Macdonald polynomials}

Let $n\in \ZZ_{>0}$.
The (Laurent) polynomial ring $\CC[x_1^{\pm1}, \ldots, x_n^{\pm1}]$ has basis
$$\{x^\mu\ |\ \mu\in \ZZ^n\},
\qquad\hbox{where}\quad
x^\mu = x_1^{\mu_1}\cdots x_n^{\mu_n}
\quad\hbox{for $\mu = (\mu_1, \ldots, \mu_n)\in \ZZ^n$.}
$$
The symmetric group
$S_n$ acts on $\CC[x_1^{\pm1}, \ldots, x_n^{\pm1}]$ 
by permuting the variables $x_1, \ldots, x_n$.
The symmetric group $S_n$ acts on $\ZZ^n$ by permuting the positions of the entries.  The two actions
are related by $wx^\mu = x^{w\mu}$ for $w\in S_n$ and $\mu\in \ZZ^n$.

Let $q, t^{\frac12}\in \CC^\times$.   For $j\in \{1, \dots, n\}$ let $X_j$ be the operator
on $\CC[x_1^{\pm1}, \ldots, x_n^{\pm1}]$ given by multiplication by $x_j$.
For $i\in \{1, \ldots, n-1\}$ let
$s_i\in S_n$ be the transposition which switches $i$ and $i+1$.  For $j\in \{1, \ldots, n\}$ 
let $y_j$ be the operator on  $\CC[x_1^{\pm1}, \ldots, x_n^{\pm1}]$ which replaces each 
occurrence of $x_j$ with $q^{-1}x_j$.  In formulas, if $f\in \CC[x_1^{\pm1}, \ldots, x_n^{\pm1}]$ then
\begin{align}
(X_jf)(x_1, \ldots, x_n) &= x_j \cdot f(x_1, \ldots, x_n), 
\nonumber  \\
(s_if)(x_1, \ldots, x_n) &= f(x_1, \ldots, x_{i-1}, x_{i+1}, x_i, x_{i+1}, \ldots, x_n), 
\label{Xjsiyjops}
\\
(y_jf)(x_1, \ldots, x_n) &= f(x_1, \ldots, x_{j-1}, q^{-1}x_j, x_{j+1}, \ldots, x_n).
\nonumber
\end{align}

Define operators $T_1, \ldots, T_{n-1}$, $T_\pi$ and $T^\vee_\pi$ on 
$\CC[x_1^{\pm1}, \ldots, x_n^{\pm1}]$ by 
\begin{equation}
T_i = -t^{-\frac12} + t^{-\frac12}(1+s_i)\frac{1-tx_i^{-1}x_{i+1}}{1-x^{-1}_ix_{i+1}},
\qquad
T_\pi = s_1s_2\cdots s_{n-1} y_n, \qquad
T^\vee_\pi = X_1T_1\cdots T_{n-1}.
\label{DAHAonCX}
\end{equation}
The \emph{Cherednik-Dunkl operators} are 
\begin{equation}
Y_1 = T_\pi T_{n-1}\cdots T_1,
\quad Y_2 = T_1^{-1}Y_1T_1^{-1}, \quad Y_3 = T_2^{-1}Y_2T_2^{-1}, \quad \ldots,\quad
Y_n= T_{n-1}^{-1}Y_{n-1}T_{n-1}^{-1}.
\label{GLnCDopsdefn}
\end{equation}

Given $\mu\in \ZZ^n$ let $v_\mu\in S_n$ be the minimal length permutation which rearranges
$\mu$ into weakly increasing order.  Explicitly, the permutation $v_\mu$ is given by
\begin{equation}
v_\mu(r) = 1+
\#\{r'\in \{1, \ldots, r-1\} \ |\ \mu_{r'}\le \mu_r \}  
+ \#\{r' \in \{r+1, \ldots, n\} \ |\ \mu_{r'}< \mu_r \}.
\label{vmuformula}
\end{equation}
By definition, the electronic Macdonald polynomials $E_\mu$ are the simultaneous 
eigenvectors for the action of the Cherednik-Dunkl operators,
\begin{equation}
Y_iE_\mu = q^{-\mu_i}t^{-v_\mu(i)}t^{\frac12(n+1)} E_\mu.
\label{eigenvalue}
\end{equation}
The ``evaluate at $\mu$'' homomorphism $\ev^t_\mu\colon \CC[Y^{\pm1}_1, \ldots, Y^{\pm1}_n]\to \CC$ is given by
\begin{equation}
\ev^t_\mu(Y_i) = q^{-\mu_i}t^{-(v_\mu(i)-1)+\frac12(n-1)}
\label{evtmudefn}
\end{equation}
(so that $\ev^t_\mu$ specializes $Y_i$ to the value 
$q^{-\mu_i}t^{-(v_\mu(i)-1)+\frac12(n-1)}$).  Extend $\ev^t_\mu$ to those elements of the 
field $\CC(Y_1, \ldots, Y_n)$ for which the specialized denominator does not vanish.
By \eqref{eigenvalue}, if $f(Y)\in \CC(Y_1,\ldots, Y_n)$ and $\ev^t_\mu(f)$ is defined then
\begin{equation}
f(Y) E_\mu = \ev^t_\mu(f(Y)) E_\mu.
\label{evequalsev}
\end{equation}

The \emph{interwiners} are
\begin{equation}
\tau_\pi^\vee = T^\vee_\pi,
\quad\hbox{and}\quad
\tau_i^\vee = T_i + \frac{t^{-\frac12}(1-t)}{1-Y^{-1}_i Y_{i+1}}\quad\hbox{for $i\in \{1, \ldots, n-1\}$.}
\label{earlyintertwinerdefn}
\end{equation}
Using the definition of $\tau^\vee_i$ and the relation $T_i-T^{-1}_i = t^{\frac12}-t^{-\frac12}$,
\begin{equation}
\tau^\vee_i= T_i+f_{i+1,i}^+ = T^{-1}_i + f_{i+1,i}^-,
\label{tauiexp}
\end{equation}
where
\begin{equation}
\qquad\hbox{where}\quad
f_{ij}^+ = \frac{t^{-\frac12}(1-t)}{1-Y_iY^{-1}_j}
\quad\hbox{and}\quad f_{ij}^- = \frac{t^{-\frac12}(1-t)Y_iY^{-1}_j}{1-Y_iY^{-1}_j},
\label{fijpmdefns}
\end{equation}
for $i,j\in \{1, \ldots, n\}$ with $i\ne j$.
The following key relations are proved (for example) in \cite[Prop.\ 5.5]{GR21},
\begin{equation}
Y_1\tau^\vee_\pi = q^{-1}\tau^\vee_\pi Y_n
\quad\hbox{and}\quad Y_i\tau^\vee_\pi = \tau^\vee_\pi Y_{i-1}
\quad\hbox{for $i\in \{2, \ldots, n\}$, and}
\label{intrelsA}
\end{equation}
\begin{equation}
Y_i \tau^\vee_i = \tau^\vee_i Y_{i+1},
\qquad
Y_{i+1} \tau^\vee_i = \tau^\vee_i Y_i,
\qquad\hbox{and}\qquad
Y_k \tau^\vee_i = \tau^\vee_i Y_k,
\label{intrelsB}
\end{equation}
for $i\in \{1, \ldots, n-1\}$ and $k\in \{1, \ldots, n\}$ with $k\not\in \{i,i+1\}$.

The following Proposition gives an explicit expression for $E_\mu$
as a sequence of intertwiners acting on the polynomial 1.

\begin{prop} \emph{\cite[Proposition 5.7 and Proposition 2.2(a)]{GR21}}
\item[(a)] Let $\mu = (\mu_1, \ldots, \mu_n)\in \ZZ^n_{\ge 0}$ and write $(r,c)\in \mu$ if
$r\in \{1, \ldots, n\}$ and $c\in \{1, \ldots, \mu_r\}$.   For $(r,c)\in \mu$ define
$$u_\mu(r,c) = \#\{ r'\in \{1, \ldots, r-1\}\ |\ \mu_{r'}<c\le \mu_r \} 
+\#\{ r'\in \{r+1, \ldots, n\}\ |\ \mu_{r'} < c-1 < \mu_r \}.
$$
Then
$$E_\mu = t^{-\frac12\ell(v^{-1}_\mu)} 
\Big(\prod_{r=1}^n \prod_{c=1}^{\mu_r}
(\tau^\vee_{u_\mu(r,c)} \cdots \tau^\vee_2\tau^\vee_1\tau^\vee_\pi)\Big)\cdot 1.$$
where the product is taken in order defined by
$(r_1,c_1)< (r_2,c_2)$ if $c_1<c_2$, and $(r_1,c)<(r_2,c)$ if $r_1<r_2$.
\item[(b)] If $\mu = (\mu_1, \ldots, \mu_n)\in \ZZ^n$ has a negative entry then
$$E_\mu = (x_1\cdots x_n)^{-m} E_{(\mu_1+m, \ldots, \mu_n+m)},
\qquad \hbox{where $-m$ is the most negative entry of $\mu$.} 
$$
\end{prop}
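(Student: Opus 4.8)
The plan is to prove (a) by induction on the total number of boxes $|\mu| = \mu_1 + \cdots + \mu_n$, using the intertwiner relations as a machine acting on eigenvalues, and to prove (b) by a short argument showing that multiplication by $x_1\cdots x_n$ uniformly rescales the Cherednik--Dunkl eigenvalues. For the base case $\mu = (0,\ldots,0)$ of (a) the product is empty and the assertion is $E_{(0,\ldots,0)} = 1$. I would check this directly from \eqref{DAHAonCX}--\eqref{GLnCDopsdefn}: a short computation gives $T_i\cdot 1 = t^{\frac12}\cdot 1$ and $T_\pi\cdot 1 = 1$, so each $Y_i$ fixes the constant $1$ up to a scalar, and since $v_{(0,\ldots,0)} = \id$ these scalars are exactly the eigenvalues prescribed by \eqref{eigenvalue}, whence $1 = E_{(0,\ldots,0)}$.

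For the inductive step the main tool is the translation of \eqref{intrelsA} and \eqref{intrelsB} into an action on spectral vectors. Writing $\zeta^\nu = (\ev^t_\nu(Y_1),\ldots,\ev^t_\nu(Y_n))$, relation \eqref{intrelsB} together with \eqref{evequalsev} shows that, whenever $\zeta^\nu_i \ne \zeta^\nu_{i+1}$, the vector $\tau^\vee_i E_\nu$ is a nonzero $Y$-eigenvector whose spectral vector is $\zeta^\nu$ with entries $i$ and $i+1$ interchanged, while \eqref{intrelsA} shows that $\tau^\vee_\pi E_\nu$ is an eigenvector with spectral vector $(q^{-1}\zeta^\nu_n, \zeta^\nu_1,\ldots,\zeta^\nu_{n-1})$. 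Comparing with \eqref{evtmudefn}, the first move realizes $\nu \mapsto s_i\nu$ (a sorting step) and the second realizes $\nu \mapsto (\nu_n+1, \nu_1,\ldots,\nu_{n-1})$ (adding a box and cycling). Thus each inner factor $\tau^\vee_{u_\mu(r,c)}\cdots\tau^\vee_1\tau^\vee_\pi$ adds one box and then bubbles the newly created eigenvalue $u_\mu(r,c)$ steps to the right into its sorted position.

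The heart of the proof, and the step I expect to be the main obstacle, is the combinatorial verification that processing the cells $(r,c)\in\mu$ in the prescribed order (by columns, then by rows) with the prescribed indices $u_\mu(r,c)$ carries $\zeta^{(0,\ldots,0)}$ to $\zeta^\mu$ \emph{through intermediate states that are genuinely spectral vectors of compositions}. Concretely, I would maintain, after each box is added, an explicit description of the current partial composition together with its sorting permutation computed from \eqref{vmuformula}, and check that $u_\mu(r,c)$ is exactly the number of adjacent transpositions needed to re-sort the new eigenvalue and that the two coordinates swapped at each step are distinct, so that no intertwiner degenerates or vanishes (here one uses genericity of $q,t$, so that distinct compositions have distinct spectral vectors). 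Finally the scalar $t^{-\frac12\ell(v^{-1}_\mu)}$ is fixed by normalization: $E_\mu$ is monic with $x^\mu$-coefficient $1$, so I would track the coefficient of $x^\mu$ produced by the raw product of intertwiners applied to $1$, where each sorting transposition contributes a factor $t^{\pm\frac12}$; these assemble to $t^{\frac12\ell(v^{-1}_\mu)}$, which the prefactor cancels.

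For (b) the key lemma is the operator identity $Y_i(x_1\cdots x_n) = q^{-1}(x_1\cdots x_n)Y_i$ for all $i$. Since $x_1\cdots x_n$ is symmetric, a direct check from \eqref{DAHAonCX} shows that each $T_i$, hence each $T^{-1}_i$, commutes with multiplication by $x_1\cdots x_n$, whereas $T_\pi = s_1\cdots s_{n-1}y_n$ satisfies $T_\pi(x_1\cdots x_n\, f) = q^{-1}(x_1\cdots x_n)(T_\pi f)$ because $y_n$ contributes the single factor $q^{-1}$; feeding this into \eqref{GLnCDopsdefn} gives the identity for $Y_1$ and then for every $Y_i$ via the recursion $Y_{i+1} = T^{-1}_iY_iT^{-1}_i$. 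Granting the lemma, for $\mu$ with most negative entry $-m$ the composition $\mu+(m,\ldots,m)$ lies in $\ZZ^n_{\ge 0}$ and has $v_{\mu+(m,\ldots,m)} = v_\mu$, so $(x_1\cdots x_n)^{-m}E_{\mu+(m,\ldots,m)}$ is a simultaneous $Y$-eigenvector whose $Y_i$-eigenvalue is $q^m\cdot q^{-(\mu_i+m)}t^{-v_\mu(i)}t^{\frac12(n+1)} = q^{-\mu_i}t^{-v_\mu(i)}t^{\frac12(n+1)}$, exactly the eigenvalue \eqref{eigenvalue} of $E_\mu$; hence it is a scalar multiple of $E_\mu$. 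Because multiplication by $(x_1\cdots x_n)^{-m}$ shifts every exponent by the constant vector $-(m,\ldots,m)$ it preserves both the monomial order and monicity, so the scalar is $1$ and the stated equality follows.
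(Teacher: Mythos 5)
The first thing to note is that the paper contains no proof of this proposition at all: it is imported verbatim from \cite{GR21} (Propositions 5.7 and 2.2(a)), so your proposal has to be judged against what a complete argument would require rather than against an in-paper proof. Measured that way, your part (b) is complete and correct: each $T_i$ commutes with multiplication by the symmetric Laurent polynomial $x_1\cdots x_n$, the operator $y_n$ supplies the single factor $q^{-1}$, so \eqref{GLnCDopsdefn} gives $Y_i(x_1\cdots x_n)=q^{-1}(x_1\cdots x_n)Y_i$ for all $i$; since $v_{\mu+(m,\ldots,m)}=v_\mu$, the function $(x_1\cdots x_n)^{-m}E_{\mu+(m,\ldots,m)}$ has exactly the eigenvalues \eqref{eigenvalue} of $E_\mu$, and monicity survives the uniform shift of exponents, which pins the scalar to $1$. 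Your base case for (a) is also right ($T_i\cdot 1=t^{\frac12}$ and $T_\pi\cdot 1=1$, giving $Y_i\cdot 1=t^{-i+\frac12(n+1)}\cdot 1$), and the two spectral moves you extract from \eqref{intrelsA} and \eqref{intrelsB} --- $\tau^\vee_i$ swapping adjacent spectral entries, $\tau^\vee_\pi$ realizing $\nu\mapsto(\nu_n+1,\nu_1,\ldots,\nu_{n-1})$ --- are the correct engine, and the same one the cited reference runs on.

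The gap is in part (a), exactly where you flagged it, and it is not minor. First, nothing in your write-up verifies that processing the boxes in the stated column-reading order with the stated indices $u_\mu(r,c)$ carries $(0,\ldots,0)$ to $\mu$ through legitimate intermediate compositions, with every swap taken in the strict-descent direction (in particular, that no swap of equal entries ever occurs, since in that case $s_i\nu=\nu$ and $\tau^\vee_i E_\nu$ is proportional to $E_\nu$ rather than to a new eigenvector, and the construction stalls). That verification is the entire content separating the stated formula from the generic assertion that \emph{some} word in the intertwiners produces $E_\mu$; your induction plan is the right one, but it remains a plan. Second, your scalar bookkeeping is wrong as described: the normalization constant does not come only from transposition factors $t^{\pm\frac12}$. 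Each application of $\tau^\vee_\pi$ also multiplies the eigenvector by a power of $t$ that depends on the current composition $\nu$ (it is $t^{\frac12(n-1)-\#\{i\,|\,\nu_i\le\nu_n\}}$, cf.\ \eqref{taupiaction} later in the paper), and these exponents are in general neither $0$ nor $\pm\tfrac12$. The identity you actually need is that the product of all the transposition factors and all the $\tau^\vee_\pi$ factors collapses to exactly $t^{\frac12\ell(v^{-1}_\mu)}$ --- a nontrivial statement tying $\sum_{(r,c)}u_\mu(r,c)=\ell(u_\mu)$ and the $\tau^\vee_\pi$ exponents to $\ell(v_\mu)$ --- and your sketch asserts this conclusion without the accounting that would prove it.
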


\begin{prop}  \label{Eepsi}
For $i\in \{1, \dots, n\}$ let $\varepsilon_i = (0,\ldots, 0, 1, 0, \ldots, 0)\in \ZZ^n$ with
$1$ in the $i$th entry and all other entries 0.  
Then
\begin{align*}
E_{\varepsilon_i} &= x_i+ \frac{(1-t)}{(1-qt^{n-i+1})}(x_{i-1}+\cdots +x_1)
\quad\hbox{and} \\
E_{-\varepsilon_i} &= x^{-1}_i+ \Big(\frac{1-t}{1-qt^i}\Big)(x^{-1}_{i+1}+\cdots +x^{-1}_n).
\end{align*}
\end{prop}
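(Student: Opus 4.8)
The plan is to prove both identities by induction on $i$, using the preceding Proposition, part (a), to fix normalizations and the intertwiner splitting $\tau^\vee_i=T_i+f^+_{i+1,i}$ of \eqref{tauiexp} to run the inductive step. The same mechanism handles $E_{\varepsilon_i}$ and $E_{-\varepsilon_i}$: in each step $f^+$ acts on the current eigenvector as a scalar computed through \eqref{evtmudefn}, \eqref{evequalsev}, while $T_i$ is evaluated explicitly on (ordinary or inverse) monomials via \eqref{DAHAonCX}. The only genuine difference between the two cases is the base case and, for negative weights, the reduction of the preceding Proposition, part (b).

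For $E_{\varepsilon_i}$ I first read off from \eqref{vmuformula} that $v_{\varepsilon_i}$ sends $i\mapsto n$, fixes $r<i$, and sends $r\mapsto r-1$ for $r>i$, so $\ell(v_{\varepsilon_i}^{-1})=n-i$ and $u_{\varepsilon_i}(i,1)=i-1$. Part (a) then reads $E_{\varepsilon_i}=t^{-\frac12(n-i)}\tau^\vee_{i-1}\cdots\tau^\vee_1\tau^\vee_\pi\cdot 1$, and because these words nest I get the ambiguity-free recursion $E_{\varepsilon_{i+1}}=t^{\frac12}\tau^\vee_iE_{\varepsilon_i}$, with base case $E_{\varepsilon_1}=t^{-\frac12(n-1)}\tau^\vee_\pi\cdot 1=x_1$ (using $T_j\cdot 1=t^{\frac12}$, whence $\tau^\vee_\pi\cdot 1=t^{\frac12(n-1)}x_1$). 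For the step I write $\tau^\vee_i=T_i+f^+_{i+1,i}$ and combine two inputs: on the eigenvector $E_{\varepsilon_i}$ the operator $f^+_{i+1,i}=t^{-1/2}(1-t)\,(1-Y_i^{-1}Y_{i+1})^{-1}$ acts by the scalar $\tfrac{t^{-1/2}(1-t)}{1-qt^{n-i}}$, obtained from $\ev^t_{\varepsilon_i}(Y_i^{-1}Y_{i+1})=qt^{n-i}$; and the Demazure--Lusztig action gives $T_ix_i=t^{-1/2}x_{i+1}$ and $T_ix_k=t^{1/2}x_k$ for $k<i$. Applying $t^{\frac12}\tau^\vee_i$ to $x_i+c_i(x_{i-1}+\cdots+x_1)$ with $c_i=\tfrac{1-t}{1-qt^{n-i+1}}$ then produces $x_{i+1}+c_{i+1}(x_i+\cdots+x_1)$ once I check the two scalar identities $t^{\frac12}\cdot\tfrac{t^{-1/2}(1-t)}{1-qt^{n-i}}=c_{i+1}$ and $c_i(t+c_{i+1})=c_{i+1}$, the second reducing to $t(1-qt^{n-i})+(1-t)=1-qt^{n-i+1}$.

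For $E_{-\varepsilon_i}$ I run the mirror induction downward in $i$ on the claimed expressions $h_i=x_i^{-1}+\tfrac{1-t}{1-qt^{i}}(x_{i+1}^{-1}+\cdots+x_n^{-1})$. I obtain the base case from part (b), $E_{-\varepsilon_i}=(x_1\cdots x_n)^{-1}E_{\mathbf 1-\varepsilon_i}$ with $\mathbf 1=(1,\dots,1)$, together with $E_{(1,\dots,1,0)}=x_1\cdots x_{n-1}$, which I establish via the auxiliary identity $(\tau^\vee_\pi)^{k}\cdot 1=t^{k(n-k)/2}x_1\cdots x_k$ (a short separate induction on $k$ from the Demazure--Lusztig action); this yields $h_n=x_n^{-1}=E_{-\varepsilon_n}$. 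The step uses $\tau^\vee_{i-1}E_{-\varepsilon_i}\propto E_{-\varepsilon_{i-1}}$ from \eqref{intrelsB} with $\tau^\vee_{i-1}=T_{i-1}+f^+_{i,i-1}$, where $f^+_{i,i-1}$ acts on $E_{-\varepsilon_i}$ by $\tfrac{t^{-1/2}(1-t)}{1-qt^{i-1}}$ (from $\ev^t_{-\varepsilon_i}(Y_{i-1}^{-1}Y_i)=qt^{i-1}$) and $T_{i-1}x_i^{-1}=t^{-1/2}x_{i-1}^{-1}$, $T_{i-1}x_k^{-1}=t^{1/2}x_k^{-1}$ for $k>i$. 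The analogous two identities, now with $qt^{i-1},qt^{i}$, give $\tau^\vee_{i-1}h_i=t^{-\frac12}h_{i-1}$.

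The main obstacle is bookkeeping of normalizations rather than any single hard computation. In the positive case part (a) hands me the exact constant in $E_{\varepsilon_{i+1}}=t^{\frac12}\tau^\vee_iE_{\varepsilon_i}$ with no proportionality ambiguity, but in the negative case the relation $\tau^\vee_{i-1}E_{-\varepsilon_i}\propto E_{-\varepsilon_{i-1}}$ of \eqref{intrelsB} holds only up to scalar, so I must fix the constant independently; I will do so by comparing the coefficient of the leading monomial $x_{i-1}^{-1}$ on both sides, invoking the triangular normalization $E_\mu=x^\mu+(\text{lower terms})$ built into the \cite{GR21} definition. The other delicate point is keeping the spectral arguments $Y_i^{-1}Y_{i+1}$ and $Y_{i-1}^{-1}Y_i$ (rather than their inverses) straight when evaluating $f^+$ through \eqref{fijpmdefns} and \eqref{evtmudefn}, since an inversion there would replace each $qt^{m}$ by $q^{-1}t^{-m}$ and break the matching with $c_i$ and $d_i$.
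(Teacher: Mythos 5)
Your proposal is correct and follows essentially the same route as the paper: induction on $i$ (upward for $E_{\varepsilon_i}$, downward for $E_{-\varepsilon_i}$) using the splitting $\tau^\vee_i = T_i + f^+_{i+1,i}$ of \eqref{tauiexp}, with the $Y$-rational part acting on the eigenvector as the scalar $\tfrac{t^{-1/2}(1-t)}{1-qt^{\bullet}}$ computed via \eqref{evtmudefn}, the explicit Demazure--Lusztig action on monomials, and the same telescoping identity $t(1-qt^{k})+(1-t)=1-qt^{k+1}$. The only differences are bookkeeping: you pin the proportionality constant by monicity of $E_\mu$ and derive the base case $E_{-\varepsilon_n}=x_n^{-1}$ from the preceding Proposition, whereas the paper states the base case outright and invokes the exact normalization $E_{-\varepsilon_i}=t^{\frac12}\tau^\vee_i E_{-\varepsilon_{i+1}}$ from \cite{GR21}, deferring the positive case (which you carry out in full) to \cite[Prop.\ 3.5]{GR21}.
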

\begin{proof}
Since $v_{-\varepsilon_{i+1}}= s_1\cdots s_i$ then
$$Y^{-1}_iY_{i+1} E_{-\varepsilon_{i+1}} 
= q^0t^{v_{-\varepsilon_{i+1}}(i)} q^1 t^{-v_{-\varepsilon_{i+1}}(i+1)}
E_{-\varepsilon_{i+1}} 
= q t^{i+1} t^{-1} E_{-\varepsilon_{i+1}} 
= q t^i E_{-\varepsilon_{i+1}} 
$$
The base case is $E_{-\varepsilon_n} = x_n^{-1}$ and the induction step is
\begin{align*}
E_{-\varepsilon_i} 
&= t^{\frac12} \tau^\vee_i E_{-\varepsilon_{i+1}} 
=\Big(t^{\frac12}T_i+\frac{(1-t)}{1-Y^{-1}_iY_{i+1}}\Big)E_{-\varepsilon_{i+1}}
=\Big(t^{\frac12}T_i+\frac{(1-t)}{1-qt^i}\Big)E_{-\varepsilon_{i+1}}
\\
&=\Big(t^{\frac12}T_i+\frac{(1-t)}{1-qt^i}\Big)(x^{-1}_{i+1}
+ \Big(\frac{1-t}{1-qt^{i+1}}\Big)
(x^{-1}_{i+2}+\cdots+x^{-1}_n)
\\
&=x^{-1}_i
+\Big(\frac{1-t}{1-qt^i}\Big)x^{-1}_{i+1}
+ \Big(\frac{1-t}{1-qt^{i+1}}\Big)\Big(t+\frac{1-t}{1-qt^i }\Big)
(x^{-1}_{i+2}+\cdots+x^{-1}_n)
\\
&=x^{-1}_i
+\Big(\frac{1-t}{1-qt^i}\Big)(x^{-1}_{i+1}+\cdots+x^{-1}_n).
\end{align*}
The proof of the first statement is similar (see \cite[Prop.\ 3.5]{GR21} for details).
\end{proof}

\begin{remark} \textbf{The source of the statistics $v_\mu(r)$ and $u_\mu(r,c)$.}
The minimal length permutation
which rearranges $\mu$ into weakly increasing order is
$v_\mu = (v_\mu(1), \ldots, v_\mu(n))$.  The affine Weyl group for type $GL_n$
is the group of $n$-periodic permutations.  If $t_\mu$ denotes the $n$-periodic permutation
which is the translation in $\mu$ then $t_\mu = u_\mu v_\mu$ with $\ell(t_\mu) = \ell(u_\mu)+\ell(v_\mu)$ and $u_\mu$ has a reduced word
$$u_\mu = \prod_{(r,c)\in \mu} (s_{u_\mu(r,c)}\cdots s_2s_1 \pi),$$
where $s_i\in S_n$ is the transposition which switches $i$ and $i+1$ and $\pi$ is the 
$n$-periodic permutation given by $\pi(i)=i+1$.  See \cite[\S2 and Prop.\ 2.2(a)]{GR21}.
\qed
\end{remark}

\section{Operator expansions}

Let $j\in \{1, \ldots, n\}$ and let $C \subseteq \{1, \ldots, n\}$.
Writing $C= \{a_1, \ldots, a_n\}$ with $a_1<\cdots < a_m$ define
\begin{equation}
f_C(Y) = \frac{t^{-(m-1)/2}}{1-qY_{a_1}Y^{-1}_{a_m}}\Big(
\prod_{i=1}^{m-1} \frac{1-t}{1-Y_{a_i}Y^{-1}_{a_{i+1}}}\Big).
\label{fCdefn}
\end{equation}
Then define
\begin{equation}
F_{C,j}(Y) = \begin{cases} 0, &\hbox{if $j\not\in C$,}
\\
1-qY_{a_1}Y^{-1}_{a_m}, &\hbox{if $j=a_p$ and $p=1$,} \\
Y_{a_1}Y^{-1}_{a_p} - Y_{a_1}Y^{-1}_{a_{p-1}}, &\hbox{if $j=a_p$ and $p\ne 1$,}
\end{cases}
\label{FCjYdefn}
\end{equation}
\begin{equation}
A_{C,j}(Y) = 
\begin{cases}
0, 
&\hbox{if $j<a_1$,}
\\
Y_{a_1}Y^{-1}_{a_p} - qY_{a_1}Y^{-1}_{a_m},
&\hbox{if $a_p\le j< a_{p+1}$,}
\\
(1-q)Y_{a_1}Y^{-1}_{a_m},
&\hbox{if $j>a_m$,}
\end{cases}
\label{ACjYdefn}
\end{equation}
\begin{equation}
\Phi_{C,j}(Y) = \begin{cases} 0, &\hbox{if $j\not\in C$,} \\
1-qY_{a_1}Y^{-1}_{a_m}, &\hbox{if $j=a_p$ and $p=m$,} \\
Y_{a_1}Y^{-1}_{a_p} - Y_{a_1}Y^{-1}_{a_{p-1}}, &\hbox{if $j=a_p$ and $p\ne m$,}
\end{cases}
\label{PhiCjYdefn}
\end{equation}
\begin{equation}
\Psi_{C,j}(Y) =
\begin{cases}
0, 
&\hbox{if $j\ge a_m$,}
\\
Y_{a_p}Y^{-1}_{a_m} - qY_{a_1}Y^{-1}_{a_m},
&\hbox{if $a_{p-1}< j\le a_p$,}
\\
(1-q)Y_{a_1}Y^{-1}_{a_m},
&\hbox{if $j\le a_1$,}
\end{cases}
\label{PsiCjYdefn}
\end{equation}
and
\begin{equation}
\begin{array}{l}
\displaystyle{B_{C,j}(Y) = F_{C,j}(Y)+\frac{1-t}{1-qt^{n-j+1}} A_{C,j}(Y)}
\quad\hbox{and} \\
\\
\displaystyle{ \Omega_{C,j}(Y) = \Phi_{C,j}(Y)+ \Big(\frac{1-t}{1-qt^j}\Big) \Psi_{C,j+1}(Y).}
\end{array}
\label{BOmegadefn}
\end{equation}
Write the complement of $C$ in $\{1, \ldots n\}$ as
$$C^c = \{b_1, \ldots, b_{n-m}\}\quad\hbox{with}\quad
b_1<\cdots < b_r < j< b_{r+1}< \cdots < b_{n-m},
$$
and define
\begin{equation}
\begin{array}{l}
\tau_{C,j} = \tau^\vee_{b_r}\tau^\vee_{b_{r-1}}\cdots \tau^\vee_{b_1}
\tau^\vee_\pi \tau^\vee_{b_{n-m}-1}\cdots \tau^\vee_{b_{r+1}-1}
\quad\hbox{and} \\
\\
\rho_{C,j} = \tau^\vee_{b_{r+1}-1}\cdots \tau^\vee_{b_{n-m}-1}(\tau^\vee_\pi)^{-1}
\tau^\vee_{b_1}\cdots \tau^\vee_{b_r},
\end{array}
\label{tauCrhoCdefn}
\end{equation}
where the $\tau^\vee_i$ are as in \eqref{earlyintertwinerdefn}.

\begin{example}  
\textbf{Examples of $f_C(Y)$, $F_{C,j}(Y)$ and $\tau^\vee_{C,j}$.}
Let $n=11$ and $C = \{2,5,7,9,10\}$.  Then
$$f_C(Y) = \frac{1}{t^{-\frac12}(1-t)}  f^+_{2,10+K} f^+_{25} f^+_{57}f^+_{79}f^+_{9,10},$$
where $f_{ij}^+$ is as in \eqref{fijpmdefns}.  Then
\begin{align*}
F_{C,2}(Y) &= 1-qY_2Y^{-1}_{10},
&F_{C,5}(Y) &= Y_2Y^{-1}_5-1,
&F_{C,7}(Y) &= Y_2Y^{-1}_7-Y_2Y^{-1}_5,
\\
F_{C,9}(Y) &=Y_2Y^{-1}_9 - Y_2Y^{-1}_7
&F_{C,10}(Y) &= Y_2Y^{-1}_{10}-Y_2Y^{-1}_9,
\end{align*}
and
$$\tau^\vee_{C,7} 
= \tau^\vee_6\tau^\vee_4\tau^\vee_3\tau^\vee_1 
\tau^\vee_\pi \tau^\vee_{11-1} \tau^\vee_{8-1}
= \tau^\vee_6\tau^\vee_4\tau^\vee_3\tau^\vee_1 
\tau^\vee_\pi \tau^\vee_{10} \tau^\vee_7,
\qquad\hbox{since\quad $C^c=\{1,3,4,6,8,11\}$.}
$$
\qed
\end{example}

\begin{thm} \label{Monkops}
\textbf{(Monk rules: operator form)}
Let $j\in \{1, \ldots, n\}$. As in \eqref{Xjsiyjops}, let $X_j$ denote the operator on $\CC[x_1^{\pm1}, \ldots, x_n^{\pm1}]$
given by multiplication by $x_i$ and let $E_{\varepsilon_j}$ and $E_{-\varepsilon_j}$ be the Macdonald polynomials
of Proposition \ref{Eepsi}, identified with the operators on $\CC[x_1^{\pm1}, \ldots, x_n^{\pm1}]$
given by multiplication by $E_{\varepsilon_j}$ and $E_{-\varepsilon_j}$, respectively.
Use the notations of \eqref{fCdefn}-\eqref{tauCrhoCdefn}.
Then, as operators on $\CC[x_1^{\pm1}, \ldots, x_n^{\pm1}]$,
\item[]\qquad \emph{(a)}
$\displaystyle{
X_j  = \sum_{C\subseteq \{1, \ldots, n\}\atop C \cap \{j\}\ne \emptyset} 
\tau_{C,j} F_{C,j}(Y)f_C(Y),
}
$
\item[]\qquad \emph{(b)}
$\displaystyle{
X_1+\cdots + X_j  = \sum_{C\subseteq \{1, \ldots, n\}\atop C \cap \{1,\ldots, j\}\ne \emptyset} 
\tau_{C,j} A_{C,j}(Y)f_C(Y),
}
$
\item[]\qquad \emph{(c)}
$\displaystyle{
E_{\varepsilon_j}  = \sum_{C\subseteq \{1, \ldots, n\}\atop C \cap \{1,\ldots,j\}\ne \emptyset} 
\tau_{C,j} B_{C,j}(Y)f_C(Y).
}
$
\item[]\qquad \emph{(d)}
$\displaystyle{
X^{-1}_j  = \sum_{C\subseteq \{1, \ldots, n\}\atop D \cap \{j\}\ne \emptyset} 
\rho_{C,j} \Phi_{C,j}(Y)f_C(Y),
}
$
\item[]\qquad \emph{(e)}
$\displaystyle{
X^{-1}_j+\cdots + X^{-1}_n  = \sum_{C\subseteq \{1, \ldots, n\}\atop C \cap \{j,\ldots, n\}\ne \emptyset} 
\rho_{C,j} \Psi_{C,j}(Y)f_C(Y)
}
$
\item[]\qquad \emph{(f)}
$\displaystyle{
E_{-\varepsilon_j}  = \sum_{C\subseteq \{1, \ldots, n\}\atop C \cap \{j,\ldots,n\}\ne \emptyset} 
\rho_{C,j} \Omega_{C,j}(Y)f_C(Y).
}
$
\end{thm}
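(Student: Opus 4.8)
The strategy is to establish each identity as an equality of operators on $\CC[x_1^{\pm1},\ldots,x_n^{\pm1}]$ by showing both sides produce the same expansion coefficients when applied to an arbitrary $E_\mu$. Since $\{E_\mu\}$ is a basis, it suffices to verify the identity on each $E_\mu$, and by \eqref{evequalsev} the rational functions $f_C(Y)$, $F_{C,j}(Y)$, etc.\ act on $E_\mu$ by the scalars $\ev^t_\mu$ of their arguments. The intertwiner products $\tau_{C,j}$ and $\rho_{C,j}$ then move $E_\mu$ to other Macdonald polynomials via the commutation relations \eqref{intrelsA} and \eqref{intrelsB}, so the theorem becomes the assertion that multiplication by $x_j$ (resp.\ $x_1+\cdots+x_j$, $E_{\varepsilon_j}$, and their inverse-variable analogues) equals a prescribed sum over subsets $C$ of ``intertwiner transport'' terms weighted by explicit rational eigenvalue factors.

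I would prove parts (a)--(c) first, then obtain (d)--(f) by a parallel argument (or by a symmetry/inversion sending $x_i\mapsto x_i^{-1}$, reversing the order of indices, which interchanges $\tau_\pi^\vee$ with $(\tau_\pi^\vee)^{-1}$ and swaps the roles of $F,A,B$ with $\Phi,\Psi,\Omega$; the definitions \eqref{FCjYdefn}--\eqref{PsiCjYdefn} are built so that $\Phi,\Psi$ are the index-reversed mirrors of $F,A$). For part (a), the key step is to derive a \emph{universal} expression for the operator $X_j$ of multiplication by $x_j$ in terms of the $Y_i$ and the intertwiners. The natural starting point is the identity $T^\vee_\pi = X_1 T_1\cdots T_{n-1}$ from \eqref{DAHAonCX}, which expresses multiplication by $x_1$ via $\tau^\vee_\pi$ and the $T_i$, together with the intertwiner commutation relations that let one conjugate $X_1$ into $X_j$. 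One rewrites each $T_i$ using $\tau^\vee_i = T_i + f^+_{i+1,i}$ from \eqref{tauiexp}, and then expands the resulting product of $(\tau^\vee_i - f^+_{i+1,i})$ factors. Each way of choosing, in every factor, either the intertwiner term $\tau^\vee_i$ or the rational term $-f^+_{i+1,i}$ corresponds bijectively to a subset $C\subseteq\{1,\ldots,n\}$; the intertwiner choices assemble into exactly the product $\tau_{C,j}$ of \eqref{tauCrhoCdefn}, while the rational choices collect into $f_C(Y)$ of \eqref{fCdefn}. The leftover scalar factor coming from the positioning of $X_1$ relative to these choices, after using \eqref{intrelsA}--\eqref{intrelsB} to commute all $Y$'s to the right, is precisely $F_{C,j}(Y)$.

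Once (a) is in hand, part (b) follows by summing over $j$: one checks the telescoping identity $\sum_{j'=1}^{j} F_{C,j'}(Y) = A_{C,j}(Y)$ for each fixed $C$, which reduces to a direct comparison of the case definitions \eqref{FCjYdefn} and \eqref{ACjYdefn} (the terms $Y_{a_1}Y^{-1}_{a_p}-Y_{a_1}Y^{-1}_{a_{p-1}}$ telescope, the initial term $1-qY_{a_1}Y^{-1}_{a_m}$ appears once, and the three cases of $A_{C,j}$ correspond to the position of $j$ relative to the elements of $C$). Part (c) then follows from (a) and (b) by applying the explicit formula $E_{\varepsilon_j}=x_j+\frac{1-t}{1-qt^{n-j+1}}(x_{j-1}+\cdots+x_1)$ of Proposition \ref{Eepsi}, so that $E_{\varepsilon_j}=X_j+\frac{1-t}{1-qt^{n-j+1}}(X_1+\cdots+X_{j-1})$; collecting the $F_{C,j}$ term from (a) with the $A_{C,j-1}$ term from (b) reproduces $B_{C,j}(Y)=F_{C,j}(Y)+\frac{1-t}{1-qt^{n-j+1}}A_{C,j}(Y)$ after a small reindexing.

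I expect the main obstacle to be the bookkeeping in the expansion of part (a): verifying that the combinatorial data of which factors contribute an intertwiner versus a rational term is encoded \emph{exactly} by the subset $C$ and its complement $C^c=\{b_1,\ldots,b_{n-m}\}$ straddling $j$, and that after conjugating all $Y$'s through the intertwiners using \eqref{intrelsA}--\eqref{intrelsB}, the accumulated eigenvalue shifts reassemble without error into the compact form $F_{C,j}(Y)f_C(Y)$. In particular, the relation $Y_1\tau^\vee_\pi=q^{-1}\tau^\vee_\pi Y_n$ introduces the factor $q$ and the ``wrap-around'' index $a_m$ that appears in the $1-qY_{a_1}Y^{-1}_{a_m}$ denominators, and tracking this cyclic contribution correctly is the delicate point. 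The inverse-variable statements (d)--(f) carry the same difficulty; the cleanest route is to set up the order-reversing symmetry carefully once and transport all of (a)--(c) through it, rather than repeating the full expansion.
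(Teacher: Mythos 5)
Your proposal follows essentially the same route as the paper's proof: writing $X_j = T_{j-1}\cdots T_1 \tau^\vee_\pi T^{-1}_{n-1}\cdots T^{-1}_j$ (which you correctly recover from $T^\vee_\pi = X_1T_1\cdots T_{n-1}$ in \eqref{DAHAonCX} by conjugation), expanding each $T_i^{\pm1}$ via \eqref{tauiexp}, matching the binary choices in the expansion with subsets $C$, and pushing the rational factors to the right with \eqref{intrelsA}--\eqref{intrelsB} to assemble $\tau_{C,j}F_{C,j}(Y)f_C(Y)$; parts (b) and (c) then follow by the telescoping identity and Proposition \ref{Eepsi} exactly as in the paper, and (d)--(f) by the parallel expansion of $X^{-1}_j = T_j\cdots T_{n-1}(\tau^\vee_\pi)^{-1}T^{-1}_1\cdots T^{-1}_{j-1}$, which is the option the paper takes rather than your alternative order-reversing symmetry. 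The only cosmetic discrepancies are that your opening framing (checking both sides on each $E_\mu$) is unnecessary since the identity is established directly at the operator level, and that the $T^{-1}_i$ factors expand as $\tau^\vee_i - f^-_{i+1,i}$ rather than with $f^+_{i+1,i}$, both forms being supplied by \eqref{tauiexp}.
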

\begin{proof}  Since the proof of (a) is longer, let us first make remarks about the proofs of (b)-(f).

\smallskip\noindent
(b) This follows from (a) and the observation that $A_{C,j} = F_{C,1}+\cdots +F_{C,j}$.

\smallskip\noindent
(c) By the first identity in Proposition \ref{Eepsi},
$$E_{\varepsilon_j} = x_j + \frac{1-t}{1-qt^{n-j+1}}(x_{j-1}+\cdots + x_1),
\qquad\hbox{so that}\quad
B_{C,j} = F_{C,j}+ \frac{1-t}{1-qt^{n-j+1}} A_{C,{j-1}}.
$$

\smallskip\noindent
(d) The proof is analogous to the proof of (a) by expanding
\begin{align*}
X^{-1}_j &= T_j\cdots T_{n-1} (\tau^\vee_\pi)^{-1} T^{-1}_1\cdots T^{-1}_{j-1}
\\
&=(\tau^\vee_j - f^+_{j+1,j}) \cdots
(\tau^\vee_{n-1}  - f^+_{n,n-1}) 
(\tau^\vee_\pi)^{-1}
(\tau^\vee_1 - f^-_{2,1}) \cdots
(\tau^\vee_{j-1}  - f^-_{j,j-1})
\\
&= \sum_{L\subseteq\{j,\ldots, n-1\} \atop R\subseteq\{1,\ldots, j-1\} } 
(-1)^{\vert L\vert+\vert R\vert} \varpi_L \tau^\vee_\pi \varpi_R,
\end{align*}
$$\varpi_L = \varpi_j\cdots \varpi_{n-1}
\quad\hbox{with}\quad
\varpi_i = \begin{cases} 
\tau^\vee_i, &\hbox{if $i\in \{j, \ldots, n-1\}$ and $i\not\in L$,} \\
f^+_{i+1,i}, &\hbox{if $i\in\{j, \ldots, n-1\}$ and $i\in L$,} 
\end{cases}\qquad\hbox{and}
$$
$$\varpi_R = \varpi_1\cdots \varpi_{j-1}
\quad\hbox{with}\quad
\varpi_i = \begin{cases}
\tau^\vee_i, &\hbox{if $i\in \{1, \ldots, j-1\}$ and $i\not\in R$,} \\
f^-_{i+1,i}, &\hbox{if $i\in\{1, \ldots, j-1\}$ and $i\in R$,} 
\end{cases}
\qquad\ \ \ 
$$
An example is provided in Example \ref{invproofexample}.

\smallskip\noindent
(e) This follows from (d) and the observation that $\Psi_{D,j} = \Phi_{D,j}+\cdots +\Phi_{D,n}$.

\smallskip\noindent
(f) By the second identity in Proposition \ref{Eepsi}
$$E_{-\varepsilon_j} = x^{-1}_j + \Big(\frac{1-t}{1-qt^j}\Big) (x^{-1}_{j+1}+\cdots + x^{-1}_n),
\qquad\hbox{so that}\quad
\Omega_{C,j} = \Phi_{D,j}+ \Big(\frac{1-t}{1-qt^j}\Big) \Psi_{D,j+1}.
$$

\smallskip\noindent
(a) As in \eqref{Xjsiyjops}, let $X_j$ denote the operator on $\CC[x^{\pm}_1, \ldots, x^{\pm1}_n]$ given by mutliplication by $x_j$.  The operator $X_j$ can
be written in terms of $\tau^\vee_\pi$ and $T_1, \ldots, T_{n-1}$ in the form
$$X_j = T_{j-1}\cdots T_1 \tau^\vee_\pi T^{-1}_{n-1}\cdots T^{-1}_j$$
(see, for example, \cite[(5.3) and \S5.6 and \S5.7]{GR21}).
Then using \eqref{tauiexp} gives
\begin{align}
X_j &= T_{j-1}\cdots T_1 \tau^\vee_\pi T^{-1}_{n-1}\cdots T^{-1}_j
\nonumber \\
&=(\tau^\vee_{j-1} - f^+_{j,j-1}) \cdots
(\tau^\vee_1  - f^+_{2,1}) 
\tau^\vee_\pi
(\tau^\vee_{n-1} - f^-_{n,n-1}) \cdots
(\tau^\vee_j  - f^-_{j+1,j})
\nonumber \\
&= \sum_{L\subseteq\{j-1,\ldots, 1\} \atop R\subseteq\{n-1,\ldots, j\} } 
(-1)^{\vert L\vert+\vert R\vert} \omega_L \tau^\vee_\pi \omega_R,
\label{expand}
\end{align}
where
$$\omega_L = \omega_{j-1}\cdots \omega_1
\quad\hbox{with}\quad
\omega_i = \begin{cases} 
\tau^\vee_i, &\hbox{if $i\in \{j-1, \ldots, 1\}$ and $i\not\in L$,} \\
f^+_{i+1,i}, &\hbox{if $i\in\{j-1, \ldots, 1\}$ and $i\in L$,} 
\end{cases}\qquad\hbox{and}
$$
$$\omega_R = \omega_{n-1}\cdots \omega_j
\quad\hbox{with}\quad
\omega_i = \begin{cases}
\tau^\vee_i, &\hbox{if $i\in \{n-1, \ldots, j\}$ and $i\not\in R$,} \\
f^-_{i+1,i}, &\hbox{if $i\in\{n-1, \ldots, j\}$ and $i\in R$.} 
\end{cases}
\qquad\ \ \ 
$$
Write
$$
\begin{array}{l}
L = \{ \ell_1, \ldots, \ell_a\}, \\
R = \{ r_1, \ldots, r_b\},
\end{array}
\qquad\hbox{with}\qquad
n>r_1> \ldots > r_b \ge j >\ell_1>\ldots > \ell_a > 0,$$
and use the relations \eqref{intrelsA} and \eqref{intrelsB} 
to move all $\tau^\vee_i$ in $\omega_L \tau^\vee_\pi \omega_R$
to the left so that
\begin{equation}
(-1)^{\vert L\vert+\vert R\vert} \omega_L \tau^\vee_\pi \omega_R = \tau^\vee_{L^c,R^c} f_{L,R},
\label{split}
\end{equation}
where
$$
\begin{array}{l}
\hbox{$L^c=\{k_1,\ldots, k_{j-1-a}\}$ with $k_1>\cdots>k_{j-1-a}$ is the complement of $L$ in $\{j-1,\ldots,1\}$,}
\\
\hbox{$R^c=\{q_1,\ldots, q_{n-j-b}\}$ with $q_1>\cdots>q_{n-j-b}$ is the complement of $R$ in $\{n-1,\ldots,j\}$,}
\end{array}
$$
$$
\tau^\vee_{L^r,R^c} = \tau^\vee_{k_1}\cdots \tau^\vee_{k_{j-1-a}} \tau^\vee_\pi
\tau^\vee_{q_1}\cdots \tau^\vee_{q_{n-j-b}},
$$
and
$$f_{L,R}
=(-1)^{\vert L\vert+\vert R\vert}  f^+_{\ell_1,\ell_2} f^+_{\ell_2,\ell_3}\cdots f^+_{\ell_{a-1},\ell_a}
f^+_{\ell_a,r_1+1+K} f^-_{r_1+1,r_2+1}f^-_{r_2+1,r_3+1}\cdots f^-_{r_{b-1}+1,r_b+1}
f^-_{r_b+1,j},
$$
where
$$f^+_{i,j+K} = \frac{t^{-\frac12}(1-t)}{1-qY_iY^{-1}_j}$$
(the $K$ in this expression is a formal notational symbol and has no other meaning in this context).
An example of this process of using the relations \eqref{intrelsA} and \eqref{intrelsB} 
to move all the $\tau^\vee_i$ in $\omega_L \tau^\vee_\pi \omega_R$
to the left is given in Example \ref{proofexample}.

Let $C=\{a_1,\ldots, a_m\} = \{\ell_a, \ldots, \ell_1, j, r_b+1, \ldots, r_1+1\}$
and let $C^c=\{b_1, \ldots, b_{n-m}\}$ be the complement of $C$ in $\{1, \ldots, n\}$ 
so that
\begin{align*}
&C=\{a_1,\ldots, a_m\}
\qquad\hbox{with}\qquad 
1\le a_1<\cdots<a_m\le n
\qquad\hbox{and}\qquad j\in C, \\
\hbox{and}\quad
&C^c = \{b_1,\ldots, b_{n-m}\}
\quad\hbox{with}\qquad
b_1<\cdots < b_r < j < b_{r+1} \cdots < b_{n-m}.
\end{align*}
Then
\begin{equation}
\tau^\vee_{L^c,R^c} 
= \tau^\vee_{b_r}\tau^\vee_{b_{r-1}}\cdots \tau^\vee_{b_1}
\tau^\vee_\pi \tau^\vee_{b_{n-m}-1}\cdots \tau^\vee_{b_{r+1}-1}
= \tau^\vee_{C,j},
\label{taupart}
\end{equation}
and letting $p$ be such that $j=a_p$ and rearranging the factors in $f_{L,R}$ gives
\begin{align*}
f_{L,R}
&= (-1)^{\vert L\vert+\vert R\vert} f^+_{\ell_a,r_1+1+K}
f^+_{\ell_{a-1},\ell_a}\cdots  f^+_{\ell_2,\ell_3} f^+_{\ell_1,\ell_2}
f^-_{r_b+1,j} f^-_{r_{b-1}+1,r_b+1}\cdots  f^-_{r_2+1,r_3+1}f^-_{r_1+1,r_2+1}
\\
&= (-1)^{\vert L\vert+\vert R\vert} f^+_{a_1,a_m+K}
f^+_{a_2a_1}\cdots  f^+_{a_{p-2}a_{p-3}}f^+_{a_{p-1}a_{p-2}}
f^-_{a_{p+1}a_p} f^-_{a_{p+2}a_{p+1}}\cdots  f^-_{a_{m-1}a_{m-2}}f^-_{a_ma_{m-1}}
\\
&= (-1)^{\vert L\vert+\vert R\vert} f^+_{a_1,a_m+K}
f^+_{a_2a_1}\cdots  f^+_{a_{p-2}a_{p-3}}f^+_{a_{p-1}a_{p-2}}
f^-_{a_{p+1}a_p} f^-_{a_{p+2}a_{p+1}}\cdots  f^-_{a_{m-1}a_{m-2}}f^-_{a_ma_{m-1}}.
\end{align*}

Now use
\begin{align*}
f^+_{ki} 
&= \frac{t^{-\frac12}(1-t)}{1-Y_kY^{-1}_i}
= -\frac{t^{-\frac12}(1-t)Y_iY^{-1}_k}{1-Y_iY^{-1}_k} 
= -Y_iY^{-1}_k f^+_{ik}
\qquad\hbox{and} \\
f^-_{ki} 
&= \frac{t^{-\frac12}(1-t)Y_kY^{-1}_i}{1-Y_kY^{-1}_i}
= -\frac{t^{-\frac12}(1-t)}{1-Y_iY^{-1}_k}
= -f^+_{ik}.
\end{align*}
If $p\ne 1$ then 
\begin{align*}
f_{L,R}
&= (-1)^{\vert L\vert+\vert R\vert} f^+_{a_1,a_m+K}
f^+_{a_2a_1}\cdots  f^+_{a_{p-2}a_{p-3}}f^+_{a_{p-1}a_{p-2}}
f^-_{a_{p+1}a_p} f^-_{a_{p+2}a_{p+1}}\cdots  f^-_{a_{m-1}a_{m-2}}f^-_{a_ma_{m-1}}
\\
&=(-1)^{\vert L\vert+\vert R\vert} (-1)^{\vert C\vert -2} f^+_{a_1,a_m+K}
(Y_{a_1}Y^{-1}_{a_2} f^+_{a_1a_2})\cdots  (Y_{a_{p-3}}Y^{-1}_{a_{p-2}} f^+_{a_{p-3}a_{p-2}})
(Y_{a_{p-2}}Y^{-1}_{a_{p-1}} f^+_{a_{p-2}a_{p-1}})
\\
&\qquad\qquad\qquad\qquad
\cdot f^+_{a_pa_{p+1}} f^+_{a_{p+1}a_{p+2}}\cdots  f^+_{a_{m-1}a_m}
\\
&=(-1) \frac{f^+_{a_1,a_m+K}}{f^+_{a_{p-1}a_p}} Y_{a_1}Y^{-1}_{a_{p-1}}
\Big(\prod_{i=1}^{m-1} f^+_{a_ia_{i+1}}\Big)
\\
&=(-1) Y_{a_1}Y^{-1}_{a_{p-1}} (1-Y_{a_{p-1}}Y^{-1}_{a_p}) \frac{1}{t^{-\frac12}(1-t)}
f^+_{a_1,a_m+K} 
\Big(\prod_{i=1}^{m-1} f^+_{a_ia_{i+1}}\Big)
\\
&=(Y_{a_1}Y^{-1}_{a_p}-Y_{a_1}Y^{-1}_{a_{p-1}})
\frac{t^{-\frac{m-1}{2}}(1-t)^{m-1}}{1-qY_{a_1}Y^{-1}_{a_m}} 
\Big( \prod_{i=1}^{m-1} \frac{1}{1-Y_{a_i}Y^{-1}_{a_{i+1}}}\Big)
= F_{C,j}(Y) f_C(Y),
\end{align*}
and if $p=1$ then
\begin{align*}
f_{L,R} 
&= (-1)^{\vert L\vert+\vert R\vert} f^-_{r_1+1,r_2+1}f^-_{r_2+1,r_3+1}\cdots f^-_{r_{b-1}+1,r_b+1}
f^-_{r_b+1,j}
\\
&= (-1)^{\vert L\vert+\vert R\vert} f^-_{a_m,a_{m-1}}f^-_{a_{m-1}a_{m-2}}\cdots f^-_{a_3a_2}f^-_{a_2a_1}
\\
&= f^+_{a_{m-1}a_m}f^+_{a_{m-2}a_{m-1}}\cdots f^+_{a_2a_3}f^+_{a_1a_2}
=\frac{1}{f^+_{a_1,a_m+K}}f^+_{a_1,a_m+K}\Big(\prod_{i=1}^{m-1} f^+_{a_ia_{i+1}}\Big)
\\
&= (1-qY_{a_1}Y^{-1}_{a_m}) \frac{1}{t^{-\frac12}(1-t)}
f^+_{a_1,a_m+K}\Big(\prod_{i=1}^{m-1} f^+_{a_ia_{i+1}}\Big)
\\
&= (1-qY_{a_1}Y^{-1}_{a_m}) 
\frac{t^{-\frac{m-1}{2}}(1-t)^{m-1}}{1-qY_{a_1}Y^{-1}_{a_m}} 
\Big( \prod_{i=1}^{m-1} \frac{1}{1-Y_{a_i}Y^{-1}_{a_{i+1}}}\Big)
= F_{C,j}(Y) f_C(Y).
\end{align*}
Inserting these expressions for $f_{L,R}$ and the expression for $\tau^\vee_{L^c,R^c}$
in \eqref{taupart} into \eqref{split} and \eqref{expand} gives the formula in the statement.
\end{proof}

\begin{remark} The $B_{C,j}$ defined in \eqref{BOmegadefn} are given by
\begin{align*}
B_{C,j} &=
\begin{cases}
0, 
&\hbox{if $j< a_1$,}
\\
1-qY_{a_1}Y^{-1}_{a_m}, &\hbox{if $j=a_1$,}
\\
\displaystyle{
\frac{1-t}{1-qt^{n-j+1}} (1-qY_{a_1}Y^{-1}_{a_m}), }
&\hbox{if $a_1< j < a_2$,}
\\
\displaystyle{
-q\frac{1-t}{1-qt^{n-j+1}}Y_{a_1}Y^{-1}_{a_m}
+Y_{a_1}Y^{-1}_{a_p} - t \frac{1-qt^{n-j}}{1-qt^{n-j+1}} Y_{a_1}Y^{-1}_{a_{p-1}}, }
&\hbox{if $j=a_p$ and $p\ne 1$,}
\\
\displaystyle{
-q\frac{1-t}{1-qt^{n-j+1}}Y_{a_1}Y^{-1}_{a_m}
+ \frac{1-t}{1-qt^{n-j+1}} Y_{a_1}Y^{-1}_{a_{p-1}}, }
&\hbox{if $a_{p-1} < j< a_p$ with $2<p$},
\\
\displaystyle{
\frac{(1-t)(1-q)}{1-qt^{n-j+1}}Y_{a_1}Y^{-1}_{a_m}, }
&\hbox{if $j>a_m$.}
\end{cases}
\end{align*}
Similar expressions can be given for the $\Omega_{C,J}$.
\qed
\end{remark}

\begin{example} \label{proofexample}
\textbf{A term in $X_j$ for $n=11$ and $j=7$.}
This is an example of the computation for the proof of part (a) of Theorem \ref{Monkops}.
In the expansion of 
\begin{align*}
X_7 &= T_6T_5T_4T_3T_2T_1 \tau^\vee_\pi T^{-1}_{10}T^{-1}_9T^{-1}_8 T^{-1}_7 \\
&= (\tau^\vee_6 - f^+_{76})\cdots (\tau^\vee_1-f^+_{21}) \tau^\vee_\pi
(\tau^\vee_{10}-f^-_{11,10})\cdots (\tau^\vee_7-f^-_{87}),
\end{align*}
the term coming from choosing the $-f^\pm_{i,i-1}$ from the 2nd, 5th, 8th and 9th factors is
\begin{align*}
\tau^\vee_6 (-f^+_{6,5}) &\tau^\vee_4  \tau^\vee_3 (-f^+_{3,2}) \tau^\vee_1
\tau^\vee_\pi
\tau^\vee_{10} (-f^-_{10,9}) (-f^-_{9,8}) \tau^\vee_7
\\
&= (-1)^4 \tau^\vee_6\tau^\vee_4\tau^\vee_3\tau^\vee_1 f^+_{6,3} f^+_{3,1}  
\tau^\vee_\pi \tau^\vee_{10} \tau^\vee_7  f^-_{10,9} f^-_{9,7} 
\\
&= (-1)^4 \tau^\vee_6\tau^\vee_4\tau^\vee_3\tau^\vee_1 
\tau^\vee_\pi f^+_{5,2} f^+_{2,11+K} \tau^\vee_{10} \tau^\vee_7  f^-_{10,9} f^-_{9,7} 
\\
&= (-1)^4 \tau^\vee_6\tau^\vee_4\tau^\vee_3\tau^\vee_1 
\tau^\vee_\pi \tau^\vee_{10} \tau^\vee_7  f^+_{5,2} f^+_{2,10+K} f^-_{10,9} f^-_{9,7}
\\
&= (-1)^4 \tau^\vee_{C,7} f^+_{5,2} f^+_{2,10+K} f^-_{10,9} f^-_{9,7},
\end{align*}
where
$C = \{2,5,7,9,10\}$ and $C^c=\{1,3,4,6,8,11\}$ so that
$$\tau^\vee_{C,7} 
= \tau^\vee_6\tau^\vee_4\tau^\vee_3\tau^\vee_1 
\tau^\vee_\pi \tau^\vee_{11-1} \tau^\vee_{8-1}
= \tau^\vee_6\tau^\vee_4\tau^\vee_3\tau^\vee_1 
\tau^\vee_\pi \tau^\vee_{10} \tau^\vee_7.
$$
Using
$$f^-_{52} 
= \frac{ t^{-\frac12}(1-t)Y_5Y^{-1}_2 }{ 1-Y_5Y^{-1}_2 }
= -\frac{ t^{-\frac12}(1-t) }{ 1-Y_2Y^{-1}_5 }
= -f^+_{25} 
$$
and
$$
f^+_{52} = \frac{t^{-\frac12}(1-t)}{1-Y_5Y^{-1}_2} 
= \frac{t^{-\frac12} Y_2Y^{-1}_5 (1-t)}{ Y_2Y^{-1}_5-1} 
= -\frac{t^{-\frac12} Y_2Y^{-1}_5 (1-t)}{ 1-Y_2Y^{-1}_5} 
= -  Y_2Y^{-1}_5 f^+_{25}
$$
gives
\begin{align*}
(-1)^4 f^+_{5,2} &f^+_{2,10+K} f^-_{10,9} f^-_{9,7}
= (-1) Y_2Y^{-1}_5 f^+_{25} f^+_{2,10+K} f^+_{9,10}f^+_{79}
= (-1) Y_2Y^{-1}_5 \frac{1}{f^+_{57}}  (f^+_{2,10+K}f^+_{25} f^+_{57} f^+_{79} f^+_{9,10})
\\
&= (-1)Y_2Y^{-1}_5
\frac{t^{-\frac12}(1-t)} {f^+_{57}} f_C = (Y_2Y^{-1}_7-Y_2Y^{-1}_5)f_C(Y)
= F_{C,7}(Y)f_C(Y).
\end{align*}
\qed
\end{example}

\begin{example} \label{invproofexample}
\textbf{An example of a term in $X^{-1}_j$ for $n=11$ and $j=7$.}
This is an example of the computation for the proof of part (d) of Theorem \ref{Monkops}.
In the expansion of 
\begin{align*}
X^{-1}_7 
&= T_7T_8T_9T_{10}(\tau^\vee_\pi)^{-1} T^{-1}_1T^{-1}_2T^{-1}_3T^{-1}_4T^{-1}_5T^{-1}_6 
\\
&= (\tau^\vee_7 - f^+_{87})\cdots (\tau^\vee_{10}-f^+_{11,10}) (\tau^\vee_\pi)^{-1}
(\tau^\vee_1-f^-_{21})\cdots (\tau^\vee_6-f^-_{76})
\end{align*}
the term coming from choosing the $-f^\pm_{i,i-1}$ from the 2nd, 5th, 8th and 9th factors is
\begin{align*}
\tau^\vee_7 (-f^+_{9,8}) &\tau^\vee_9  \tau^\vee_{10} 
(\tau^\vee_\pi)^{-1}
(-f^-_{2,1}) \tau^\vee_2\tau^\vee_3 (-f^-_{5,4})(-f^-_{6,5}) \tau^\vee_6
\\
&= (-1)^4 \tau^\vee_7\tau^\vee_9\tau^\vee_{10} f^+_{11,8}
(\tau^\vee_\pi)^{-1} 
\tau^\vee_2\tau^\vee_3 \tau^\vee_6 f^-_{4,1} f^-_{5,4} f^-_{7,5} 
\\
&= (-1)^4 \tau^\vee_7\tau^\vee_9\tau^\vee_{10}
(\tau^\vee_\pi)^{-1} f^+_{1-K,9}
\tau^\vee_2\tau^\vee_3 \tau^\vee_6 f^-_{4,1} f^-_{5,4} f^-_{7,5} 
\\
&= (-1)^4 \tau^\vee_7\tau^\vee_9\tau^\vee_{10}
(\tau^\vee_\pi)^{-1} 
\tau^\vee_2\tau^\vee_3 \tau^\vee_6 f^+_{1-K,9}f^-_{4,1} f^-_{5,4} f^-_{7,5} 
\\
&= (-1)^4 \rho_{D,7} f^+_{1-K,9} f^-_{7,5} f^-_{5,4} f^-_{4,1} 
\end{align*}
where
$D = \{1,4,5,7,9\}$ and $D^c = \{2,3,6,8,10,11\}$ so that
$$
\rho_{D,7}
= \tau^\vee_7\tau^\vee_9\tau^\vee_{10} (\tau^\vee_\pi)^{-1} 
\tau^\vee_2\tau^\vee_3 \tau^\vee_6
= \tau^\vee_{8-1}\tau^\vee_{10-1}\tau^\vee_{11-1} (\tau^\vee_\pi)^{-1} 
\tau^\vee_2\tau^\vee_3 \tau^\vee_6
$$
Using $f^-_{ij} = -f^+_{ji}$ and $f^+_{ji} = -Y_iY^{-1}_j f^+_{ij}$ gives
\begin{align*}
(-1)^4 f^+_{1-K,9} f^-_{7,5} f^-_{5,4} f^-_{4,1}
&= (-1) f^+_{1-K,9} f^+_{5,7} f^+_{4,5}f^+_{1,4}
= (-1) \frac{1}{f^+_{7,9}} f^+_{1-K,9}  f^+_{1,4}f^+_{4,5}f^+_{5,7}f^+_{7,9}
\\
&=(Y_7Y^{-1}_9-1) \frac{1}{t^{-\frac12}(1-t)} f^+_{1-K,9}  f^+_{1,4}f^+_{4,5}f^+_{5,7}f^+_{7,9}
= \Phi_{D,7}(Y)f_D(Y).
\end{align*}
\qed
\end{example}

\section{Monk rules for Macdonald polynomials}

Let $k^\uparrow\colon \ZZ^n\to \ZZ^n$ be the function which increments the $k$th coordinate by 1,
and let $k^\downarrow \colon \ZZ^n\to \ZZ^n$ be the function which decreases the $k$th coordinate by 1,
so that if $\mu = (\mu_1, \ldots, \mu_n)$ then
\begin{align*}
k^{\uparrow}\mu = k^{\uparrow}(\mu_1, \ldots, \mu_n) &= (\mu_1, \ldots, \mu_{k-1}, \mu_k+1, \mu_{k+1}, \ldots, \mu_n) \quad\hbox{and}
\\
k^{\downarrow}\mu = k^\downarrow(\mu_1, \ldots, \mu_n) &= (\mu_1, \ldots, \mu_{k-1}, \mu_k-1, \mu_{k+1}, \ldots, \mu_n).
\end{align*}
Let $j\in \{1, \ldots, n\}$ and let $C\subseteq \{1, \ldots, n\}$.
Write $C = \{a_1, \ldots, a_m\}$ with $a_1< a_2< \cdots < a_m$.
For $\mu = (\mu_1, \ldots, \mu_n) \in \ZZ^n$ define
\begin{equation}
\mathrm{rot}_C(\mu) = \gamma_C a_m^\uparrow \mu,
\qquad\hbox{where, in cycle notation, $\gamma_C=(a_1,\ldots, a_m)\in S_n$.}
\label{rotmuCdefn}
\end{equation}
Thus, $\mathrm{rot}_C(\mu)$ is the same as $\mu$ except that in $\mathrm{rot}_C(\mu)$ 
the parts of $\mu$ indexed by the elements of $C$ have been rotated and $1$ has been added to 
$\mu_{a_m}$.
\begin{equation}
\mu = 
\begin{matrix}\begin{tikzpicture}[scale=.9]
\draw (0,0) circle (2cm);
\foreach \x in {1, ..., 24} {\draw (0,0) to (\x*360/24:2.1cm);}
\draw[thick, -latex] (0,0) to (0,2.1);
\foreach \x/\y in {0/1,1/2,2/3,23/n,22/{n\sm1}}{\node at (90-\x*360/24:2.5cm) {\scriptsize$\mu_{\y}$};}
\foreach \x in {3, 3.2, 2.8, 21, 21.2, 20.8}{ \node[D] at (90-\x*360/24:2.5cm) {};}
\foreach \x in {5,7,13,14,17,20} {\draw[very thick,red!80!black] (0,0) to (90 - \x*360/24:2.1cm);}
\foreach \x/\y in {5/{a_1},7/{a_2},20/{a_m}}{\node[red!80!black] at (90-\x*360/24:2.5cm) {\scriptsize$\mu_{\y}$};}
\node[white] at (90-13*360/24:2.5cm) {\scriptsize$\phantom{\mu_{a_2}}$};
\end{tikzpicture}\end{matrix}
\qquad 
\mathrm{rot}_C(\mu) = 
\begin{matrix}\begin{tikzpicture}[scale=.9]
\draw (0,0) circle (2cm);
\foreach \x in {1, ..., 24} {\draw (0,0) to (\x*360/24:2.1cm);}
\draw[thick, -latex] (0,0) to (0,2.1);
\foreach \x/\y in {0/1,1/2,2/3,23/n,22/{n\sm1}}{\node at (90-\x*360/24:2.5cm) {\scriptsize$\mu_{\y}$};}
\foreach \x in {3, 3.2, 2.8, 21, 21.2, 20.8}{ \node[D] at (90-\x*360/24:2.5cm) {};}
\foreach \x in {5,7,13,14,17,20} {\draw[very thick,red!80!black] (0,0) to (90 - \x*360/24:2.1cm);}
\foreach \x/\y in {7/{a_1},13/{a_2},20/{a_{m\sm1}}}{\node[red!80!black] at (90-\x*360/24:2.5cm) {\scriptsize$\mu_{\y}$};}
\node[red!80!black] at (90-5*360/24:2.5cm) {\scriptsize\hspace{15pt} $\mu_{a_m} + 1$};
\end{tikzpicture}\end{matrix}
\label{rotmuCpic}
\end{equation}
Let $\mathrm{rrot}_C$ be the inverse operation to $\mathrm{rot}_C$ so that
$\mathrm{rrot}_C(\mathrm{rot_C}(\mu))=\mu$ and 
$\mathrm{rrot}_C(\mu)$ is the same as $\mu$ except that in $\mathrm{rrot}_C(\mu)$ 
the parts of $\mu$ indexed by the elements of $C$ have been rotated counterclockwise
and $1$ has been \emph{subtracted} from
$\mu_{a_1}$.

For $k\in \{1, \ldots, n\}$ such that $k\not\in C$ define
\begin{equation}
b(k) = \begin{cases}
\mu_{a_m}+1, &\hbox{if $1\le k<a_1$,} \\
\mu_{a_i}, &\hbox{if $a_i< k < a_{i+1}$,}
\\
\mu_{a_m}, &\hbox{if $a_m< k\le n$,}
\end{cases}
\quad\hbox{and}\quad
c(k) = \begin{cases}
v_{a_m^\uparrow \mu}(a_m), &\hbox{if $1\le k<a_1$,} \\
v_\mu(a_i), &\hbox{if $a_i< k < a_{i+1}$,}
\\
v_\mu(a_m), &\hbox{if $a_m< k\le n$,}
\end{cases}
\label{bkckdefns}
\end{equation}
and
$$d(k) = \begin{cases}
\mu_{a_1}-1, &\hbox{if $a_m< k\le n$,} \\
\mu_{a_i}, &\hbox{if $a_{i-1}< k < a_i$,}
\\
\mu_{a_1}, &\hbox{if $1\le k < a_1$,}
\end{cases}
\quad\hbox{and}\quad
e(k) = \begin{cases}
v_{a_1^\downarrow \mu}(a_1), &\hbox{if $a_m<k\le n$,} \\
v_\mu(a_i), &\hbox{if $a_{i-1}< k < a_i$,}
\\
v_\mu(a_1), &\hbox{if $1\le k< a_1$,}
\end{cases}
$$
Keeping $k\in \{1, \ldots, n\}$ such that $k\not\in C$ define
$$\wt_\mu(C, k) = \begin{cases}
0, &\hbox{if $b(k) =\mu_k$,}
\\
1, &\hbox{if $b(k)>\mu_k$,}
\\
\displaystyle{
t \frac{(1-q^{\mu_k-b(k)} t^{v_\mu(k)-c(k)+1})(1-q^{\mu_k-b(k)}t^{v_\mu(k)-c(k)-1})}
{(1-q^{\mu_k-b(k)}t^{v_\mu(k)-c(k)})^2},
}
&\hbox{if $b(k)<\mu_k$,}
\end{cases}
$$
and
$$\mathrm{rwt}_\mu(C, k) = \begin{cases}
0, &\hbox{if $d(k) =\mu_k$,}
\\
t^{-1}, &\hbox{if $d(k)>\mu_k$,}
\\
\displaystyle{
\frac{(1-q^{\mu_k-d(k)} t^{v_\mu(k)-e(k)+1})(1-q^{\mu_k-d(k)}t^{v_\mu(k)-e(k)-1})}
{(1-q^{\mu_k-d(k)}t^{v_\mu(k)-e(k)})^2},
}
&\hbox{if $d(k)<\mu_k$.}
\end{cases}
$$
For $k\in \{1, \ldots, n\}$ such that $k\in C$ define
\begin{equation}
\wt_\mu(C,k) = \mathrm{rwt}_\mu(C,k) = \begin{cases}
\displaystyle{
\frac{1-t}{1-q^{\mu_{a_{i+1}}-\mu_{a_i}}t^{v_\mu(a_{i+1})-v_\mu(a_i)}},
}
&\hbox{if $k=a_i$ and $i\ne m$,}
\\
\displaystyle{
\frac{1}{1-q^{\mu_{a_m}-\mu_{a_1}+1}t^{v_\mu(a_m)-v_\mu(a_1)}},
}
&\hbox{if $k = a_m$.}
\end{cases}
\label{wtmuCkdefn}
\end{equation}
Then define
\begin{equation}
\wt_\mu(C) = t^{-\#\{ i\ |\ \mu_i> \mu_{a_m}\}} 
\prod_{i=1}^n \wt_\mu(C,k),
\label{wtmuCdefn}
\end{equation}
and
\begin{equation}
\mathrm{rwt}_\mu(C) = t^{\#\{ i\ |\ \mu_i> \mu_{a_1}\}} 
\prod_{i=1}^n \mathrm{rwt}_\mu(C,k),
\label{rwtmuCdefn}
\end{equation}

\begin{thm}  \label{Monkthm} \textbf{(Monk rules for Macdonald polynomials)}
Let $j\in \{1,\ldots, n\}$ and $\mu\in \ZZ_{\ge 0}^n$.  Let $E_\mu$ denote the
electronic Macdonald polynomial indexed by $\mu$ in $\CC[x_1^{\pm1},\ldots, x_n^{\pm1}]$.
Let
\begin{align*}
F_\mu(C,j) 
&=
\begin{cases}
0, &\hbox{if $j\not\in C$.}
\\
1-q^{\mu_{a_m}-\mu_{a_1}+1 }t^{v_\mu(a_m)-v_\mu(a_1)}, 
&\hbox{if $j=a_p$ and $p=1$,}
\\
q^{\mu_{a_p}-\mu_{a_1 } }t^{v_\mu(a_p)-v_\mu(a_1) } 
-q^{\mu_{a_{p-1}}-\mu_{a_1 } }t^{v_\mu(a_{p-1})-v_\mu(a_1)},
&\hbox{ if $j=a_p$ and $p\ne1$,}
\end{cases}
\\
\\
A_\mu(C,j) 
&= \begin{cases}
0, 
&\hbox{if $j<a_1$,}
\\
q^{\mu_{a_p}-\mu_{a_1 } }t^{v_\mu(a_p)-v_\mu(a_1) } 
-q^{\mu_{a_m}-\mu_{a_1}+1 }t^{v_\mu(a_m)-v_\mu(a_1)},
&\hbox{if $a_p\le j< a_{p+1}$,}
\\
(1-q)q^{\mu_{a_m}-\mu_{a_1 } }t^{v_\mu(a_m)-v_\mu(a_1) },
&\hbox{if $j>a_m$,}
\end{cases}
\end{align*}
and
\begin{equation}
B_\mu(C,j) = F_\mu(C,j)+ \frac{1-t}{1-qt^{n-j+1}} A_\mu(C,j).
\label{FABdefn}
\end{equation}
Let
\begin{align*}
\Phi_\mu(C,j) 
&=
\begin{cases}
0, &\hbox{if $j\not\in C$.}
\\
1-q^{\mu_{a_m}-\mu_{a_1}+1 }t^{v_\mu(a_m)-v_\mu(a_1)}, 
&\hbox{if $j=a_p$ and $p=m$,}
\\
q^{\mu_{a_p}-\mu_{a_1}}t^{v_\mu(a_p)-v_\mu(a_1)} 
-q^{\mu_{a_{p-1}}-\mu_{a_1 } }t^{v_\mu(a_{p-1})-v_\mu(a_1)},
&\hbox{ if $j=a_p$ and $p\ne m$,}
\end{cases}
\\
\\
\Psi_\mu(C,j) 
&= \begin{cases}
0, 
&\hbox{if $j<a_1$,}
\\
q^{\mu_{a_m}-\mu_{a_p } }t^{v_\mu(a_m)-v_\mu(a_p) } 
-q^{\mu_{a_m}-\mu_{a_1}+1 }t^{v_\mu(a_m)-v_\mu(a_1)},
&\hbox{if $a_{p-1}< j\le a_p$,}
\\
(1-q)q^{\mu_{a_m}-\mu_{a_1 } }t^{v_\mu(a_m)-v_\mu(a_1) },
&\hbox{if $j\le a_1$,}
\end{cases}
\end{align*}
and
\begin{equation}
\Omega_\mu(C,j) = \Phi_\mu(C,j)+ \frac{1-t}{1-qt^j} \Psi_\mu(C,j).
\label{FABinvdefn}
\end{equation}
Let $\mathrm{rot}_\mu(C)$ and $\wt_\mu(C)$ as  in \eqref{rotmuCdefn} 
and \eqref{wtmuCdefn},
and let $\mathrm{rrot}_\mu(C)$ and $\mathrm{rwt}_\mu(C)$ be as defined in \eqref{rotmuCdefn} 
and \eqref{rwtmuCdefn}. 
Then
\item[(a)] 
$$x_j E_\mu = \sum_{C\subseteq \{1, \ldots, n\} \atop C\cap\{j\}\ne\emptyset} 
F_\mu(C,j) \wt_\mu(C) E_{\mathrm{rot}_C(\mu)},
$$
\item[(b)] 
$$(x_1+\cdots+x_j) E_\mu = \sum_{C\subseteq \{1, \ldots, n\} \atop C\cap \{1,\ldots, j\}\ne \emptyset} 
A_\mu(C,j) \wt_\mu(C) E_{\mathrm{rot}_C(\mu)},
$$
\item[(c)] 
$$E_{\varepsilon_j} E_\mu = \sum_{C\subseteq \{1, \ldots, n\} \atop C\cap \{1,\ldots, j\}\ne \emptyset} 
B_\mu(C,j) \wt_\mu(C) E_{\mathrm{rot}_C(\mu)},
$$
\item[(d)] 
$$x^{-1}_j E_\mu = \sum_{C\subseteq \{1, \ldots, n\} \atop C\cap\{j\}\ne\emptyset} 
\Phi_\mu(C,j) \mathrm{rwt}_\mu(C) E_{\mathrm{rrot}_C(\mu)},
$$
\item[(e)] 
$$(x^{-1}_j+\cdots+x^{-1}_n) E_\mu 
= \sum_{C\subseteq \{1, \ldots, n\} \atop C\cap \{j,\ldots, n\}\ne \emptyset} 
\Psi_\mu(C,j) \mathrm{rwt}_\mu(C) E_{\mathrm{rrot}_C(\mu)},
$$
\item[(f)] 
$$E_{-\varepsilon_j} E_\mu = \sum_{C\subseteq \{1, \ldots, n\} \atop C\cap \{j,\ldots, n\}\ne \emptyset} 
\Omega_\mu(C,j) \mathrm{rwt}_\mu(C) E_{\mathrm{rrot}_C(\mu)},
$$
\end{thm}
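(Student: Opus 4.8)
The plan is to deduce Theorem \ref{Monkthm} from the operator identities of Theorem \ref{Monkops} by applying each identity to $E_\mu$ and then ``evaluating at $\mu$''. Take part (a). The coefficient $F_{C,j}(Y)f_C(Y)$ is a rational function in the Cherednik--Dunkl operators, so by \eqref{evequalsev} it acts on $E_\mu$ as the scalar $\ev^t_\mu(F_{C,j}(Y)f_C(Y))$, and since a scalar commutes past $\tau_{C,j}$ I obtain
$$x_jE_\mu = \sum_{C\cap\{j\}\ne\emptyset} \ev^t_\mu\big(F_{C,j}(Y)f_C(Y)\big)\,\tau_{C,j}E_\mu.$$
Thus the proof splits into two independent tasks: evaluating the $Y$-coefficient, and computing the image $\tau_{C,j}E_\mu$.

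For the first task, \eqref{evtmudefn} gives $\ev^t_\mu(Y_aY_b^{-1}) = q^{\mu_b-\mu_a}t^{v_\mu(b)-v_\mu(a)}$, and substituting this into \eqref{FCjYdefn} reproduces $F_\mu(C,j)$ case by case, while substituting into \eqref{fCdefn} yields $t^{-(m-1)/2}$ times precisely the factors $\wt_\mu(C,a_i)$ attached to the indices $a_i\in C$ in \eqref{wtmuCkdefn}. The same substitution turns the $Y$-polynomials in $A_{C,j}(Y)$, $\Phi_{C,j}(Y)$, $\Psi_{C,j}(Y)$ into $A_\mu(C,j)$, $\Phi_\mu(C,j)$, $\Psi_\mu(C,j)$, and because the prefactors $\frac{1-t}{1-qt^{n-j+1}}$ and $\frac{1-t}{1-qt^{j}}$ contain no $Y_i$ they pass through $\ev^t_\mu$ untouched, producing $B_\mu(C,j)$ and $\Omega_\mu(C,j)$ exactly as in \eqref{FABdefn} and \eqref{FABinvdefn}.

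The second and harder task is the intertwiner computation, which I would package as the identity
$$\ev^t_\mu\big(f_C(Y)\big)\,\tau_{C,j}E_\mu = \wt_\mu(C)\,E_{\mathrm{rot}_C(\mu)}.$$
Here I would use the single-intertwiner action following from \eqref{intrelsA}, \eqref{intrelsB} and the eigenvalues \eqref{eigenvalue}: each $\tau^\vee_i$ transposes the $i$th and $(i+1)$st $Y$-eigenvalues, and $\tau^\vee_\pi$ rotates them cyclically while the multiplication by $x_1$ built into $\tau^\vee_\pi=T^\vee_\pi$ raises the relevant part by $1$; reading these off along the word \eqref{tauCrhoCdefn} shows $\tau_{C,j}E_\mu$ is a scalar multiple of $E_{\mathrm{rot}_C(\mu)}$, with $\mathrm{rot}_C$ as in \eqref{rotmuCdefn}. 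The scalar is accumulated one factor at a time as each $\tau^\vee_{b_\ell}$ crosses the non-$C$ index $k=b_\ell$: the three branches of $\wt_\mu(C,k)$ should correspond exactly to the degenerate case where the crossing value equals $\mu_k$ (the intertwiner kills the term, giving $0$), the case $b(k)>\mu_k$ where the eigenvalues are already ordered (contributing $1$ up to a power of $t$), and the case $b(k)<\mu_k$ where the intertwiner genuinely crosses and contributes the squared-intertwiner scalar $t\,(1-zt)(1-zt^{-1})/(1-z)^2$ with $z=q^{\mu_k-b(k)}t^{v_\mu(k)-c(k)}$. Matching these branches to \eqref{bkckdefns}, seeing that the $t^{-(m-1)/2}$ left over from $\ev^t_\mu(f_C(Y))$ is cancelled by the complementary half-integer powers emitted by the $\tau^\vee$'s, and collecting the residual integer powers into the prefactor $t^{-\#\{i\,:\,\mu_i>\mu_{a_m}\}}$ of \eqref{wtmuCdefn} is the delicate bookkeeping I expect to be the main obstacle; I would anchor the normalizations of the $\tau^\vee$'s to the creation formula for $E_\mu$ stated at the start of Section 1 to keep the half-integer powers of $t$ unambiguous.

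Once this lemma is in hand, part (a) follows, and parts (b), (c) follow from the same two tasks applied to Theorem \ref{Monkops}(b),(c), merely replacing $F_\mu(C,j)$ by $A_\mu(C,j)$ and $B_\mu(C,j)$. Parts (d)--(f) are the mirror image: starting from Theorem \ref{Monkops}(d)--(f), the same evaluation produces $\Phi_\mu(C,j)$, $\Psi_\mu(C,j)$, $\Omega_\mu(C,j)$, while the analogous computation for the word $\rho_{C,j}$ of \eqref{tauCrhoCdefn} gives $\rho_{C,j}E_\mu=(\text{scalar})\,E_{\mathrm{rrot}_C(\mu)}$ with the scalar assembling into $\mathrm{rwt}_\mu(C)$ of \eqref{rwtmuCdefn}; the only changes are that $(\tau^\vee_\pi)^{-1}$ now lowers rather than raises, producing $\mathrm{rrot}_C$ in place of $\mathrm{rot}_C$, and the three-branch weight is $\mathrm{rwt}_\mu(C,k)$ built from $d(k),e(k)$ rather than $b(k),c(k)$.
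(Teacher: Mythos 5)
Your proposal is correct and follows essentially the same route as the paper's proof: apply Theorem \ref{Monkops} to $E_\mu$, use \eqref{evequalsev} to replace the $Y$-rational coefficients by their evaluations (the paper's \eqref{evals}), and compute $\tau_{C,j}E_\mu$ (resp.\ $\rho_{C,j}E_\mu$) one intertwiner at a time via the single-intertwiner actions, which is exactly the paper's \eqref{tauiaction}, \eqref{taupiaction} and \eqref{tauCjonEmu}, with the same cancellation of half-integer powers of $t$ that you describe. The only cosmetic difference is that the paper cites the normalized formulas $t^{\frac12}\tau^\vee_iE_\mu=E_{s_i\mu}$ and $\tau^\vee_\pi E_\mu = t^{\frac12(n-1)-\#\{i \,|\, \mu_i\le\mu_n\}}E_{\pi\mu}$ from \cite{GR21} rather than re-deriving them from the creation formula.
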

\begin{proof}
From \cite[(4.1) and (4.2)]{GR21}, if $\mu_i>\mu_{i+1}$ then
\begin{equation}
\begin{array}{l}
t^{\frac12}\tau^\vee_i E_\mu = E_{s_i\mu}
\quad\hbox{and} \\
\displaystyle{
t^{\frac12}\tau^\vee_i E_{s_i\mu} 
= t \frac{
(1-q^{\mu_i-\mu_{i+1}}t^{v_\mu(i)-v_\mu(i+1)+1}) 
(1-q^{\mu_i-\mu_{i+1} } t^{v_\mu(i)-v_\mu(i+1)-1} ) }
{(1-q^{\mu_i-\mu_{i+1}}t^{v_\mu(i)-v_\mu(i+1)})^2}
E_\mu.
}
\end{array}
\label{tauiaction}
\end{equation}
From \cite[(3.5)]{GR21},
\begin{equation}
\tau^\vee_\pi E_\mu = t^{\frac12(n-1)-\#\{i\in \{1,\ldots, n-1\}\ |\ \mu_i\le \mu_n\}} E_{\pi\mu}.
\label{taupiaction}
\end{equation}
If the complement of $C$ in $\{1, \ldots n\}$ is
$$C^c = \{b_1, \ldots, b_{n-m}\}\quad\hbox{with}\quad
b_1<\cdots < b_r < j< b_{r+1}< \cdots < b_{n-m}
$$
then
$
\tau^\vee_{C,j} 
= \tau^\vee_{b_r}\tau^\vee_{b_{r-1}}\cdots \tau^\vee_{b_1}
\tau^\vee_\pi \tau^\vee_{b_{n-1}-1}\cdots \tau^\vee_{b_{r+1}-1}
$
and using \eqref{tauiaction} and \eqref{taupiaction} gives
\begin{align}
t^{\frac12(n-m)}\tau^\vee_{C,j} E_\mu
&= t^{\frac12}\tau^\vee_{b_r}t^{\frac12}\tau^\vee_{b_{r-1}}\cdots t^{\frac12}\tau^\vee_{b_1}
\tau^\vee_\pi t^{\frac12}\tau^\vee_{b_{n-m}-1}\cdots t^{\frac12}\tau^\vee_{b_{r+1}-1} E_\mu
\nonumber \\
&= t^{\frac12(n-1)-\#\{\mu_i>\mu_{a_m}\}} \Big(\prod_{k\not\in C} \wt_\mu(C,k)\Big)
E_{\mathrm{rot}_C(\mu)}.
\label{tauCjonEmu}
\end{align}
An example of the step-by-step computation of $\tau^\vee_{C,j} E_\mu$ is given in Example \ref{proofexample2}.

Let $f_C(Y)$ and $F_{C,j}(Y)$ be as defined in \eqref{fCdefn} and \eqref{FCjYdefn}, 
and let $\ev^t_\mu$ be the evaluation map defined in \eqref{evtmudefn}.
Since
$$\ev^t_\mu(Y_iY^{-1}_j)
= q^{\mu_j-\mu_i}t^{v_\mu(j)-v_\mu(i)}
\qquad\hbox{and}\qquad
\ev^t_\mu\Big(\frac{1-t}{1-Y_iY^{-1}_j} \Big)
=\frac{1-t}{1- q^{\mu_j-\mu_i}t^{v_\mu(j)-v_\mu(i)}}
$$
then comparing \eqref{wtmuCkdefn} and \eqref{fCdefn} gives
\begin{equation}
\ev^t_\mu(f_C(Y)) = t^{-\frac12(m-1)}\prod_{k\in C} \wt_\mu(C,k)
\qquad\hbox{and}\qquad
\ev^t_\mu(F_{C,j}(Y)) = F_\mu(C,j).
\label{evals}
\end{equation}
Using \eqref{evequalsev} on the expression in Theorem \ref{Monkops}(a) and inserting
\eqref{evals} and \eqref{tauCjonEmu} gives
\begin{align*}
&x_j E_\mu 
= X_jE_\mu = \sum_{C\subseteq \{1, \ldots, n\}\atop j\in C}
\tau^\vee_{C,j} F_{C.j}(Y)f_C(Y) E_\mu 
\\
&= \sum_{C\subseteq \{1, \ldots, n\}\atop j\in C}
\tau^\vee_{C,j} \ev^t_\mu(F_{C,j}(Y))\ev^t_\mu(f_C(Y)) E_\mu 
\\
&= \sum_{C\subseteq \{1, \ldots, n\}\atop j\in C}
F_\mu(C,j)t^{-\frac12(m-1)}\Big(\prod_{k\in C} \wt_\mu(C,k)\Big) \tau^\vee_{C,j} E_\mu
\\
&= \sum_{C\subseteq \{1, \ldots, n\}\atop j\in C}
F_\mu(C,j)t^{-\frac12(m-1)}\Big(\prod_{k\in C} \wt_\mu(C,k)\Big)
t^{\frac12(n-1)-\#\{\mu_i<\mu_{a_m}\}} \Big(\prod_{k\not\in C} \wt_\mu(C,k)\Big)
t^{-\frac12(n-m)}E_{\mathrm{rot}_C(\mu)}
\\
&= \sum_{C\subseteq \{1, \ldots, n\}\atop j\in C}
F_\mu(C,j)\wt_\mu(C) E_{\mathrm{rot}_C(\mu)}.
\end{align*}
This completes the proof of (a).  The proof of the remaining parts is similar, using
parts (b)-(f) of Theorem \ref{Monkops}.
\end{proof}

\begin{example} \label{proofexample2}
\textbf{An example of the computation of $\tau^\vee_{C,j}E_\mu$.}  Let $n=11$ and $j=7$ and
$$C = \{2,5,7,9,10\},
\qquad\hbox{so that}\quad
\tau^\vee_{C,7} 
= \tau^\vee_6\tau^\vee_4\tau^\vee_3\tau^\vee_1 
\tau^\vee_\pi \tau^\vee_{10} \tau^\vee_7
= \tau^\vee_6\tau^\vee_4\tau^\vee_3\tau^\vee_1 
\tau^\vee_\pi \tau^\vee_{11-1} \tau^\vee_{8-1}$$
and $C^c=\{1,3,4,6,8,11\}$.
Then, using \eqref{tauiaction} and \eqref{taupiaction},
\begin{align*}
t^{\frac62}&\tau^\vee_{C,7} E_\mu
= t^{\frac12}\tau^\vee_6t^{\frac12}\tau^\vee_4t^{\frac12}\tau^\vee_3t^{\frac12}\tau^\vee_1 
\tau^\vee_\pi t^{\frac12}\tau^\vee_{11-1} t^{\frac12}\tau^\vee_{8-1}
E_{(\mu_1, {\color{red}\mu_2}, \mu_3, \mu_4, {\color{red}\mu_5}, \mu_6, {\color{red}\mu_7}, \mu_8,
{\color{red}\mu_9}, {\color{red}\mu_{10}}, \mu_{11})}
\\
&= \wt_\mu(C,8) t^{\frac12}\tau^\vee_6 t^{\frac12}\tau^\vee_4t^{\frac12}\tau^\vee_3t^{\frac12}\tau^\vee_1 
\tau^\vee_\pi t^{\frac12}\tau^\vee_{11-1} 
E_{(\mu_1, {\color{red}\mu_2}, \mu_3, \mu_4, {\color{red}\mu_5}, \mu_6, \mu_8, {\color{red}\mu_7}, 
{\color{red}\mu_9}, {\color{red}\mu_{10}}, \mu_{11})}
\\
&= \wt_\mu(C,8) \wt_\mu(C,11) t^{\frac12}\tau^\vee_6t^{\frac12}\tau^\vee_4t^{\frac12}\tau^\vee_3t^{\frac12}\tau^\vee_1 
\tau^\vee_\pi 
E_{(\mu_1, {\color{red}\mu_2}, \mu_3, \mu_4, {\color{red}\mu_5}, \mu_6, \mu_8, {\color{red}\mu_7}, 
{\color{red}\mu_9}, \mu_{11}, {\color{red}\mu_{10}})}
\\
&= t^{\frac12(11-1)-\#\{\mu_i<\mu_{10}\}}
\wt_\mu(C,8) \wt_\mu(C,11) t^{\frac12}\tau^\vee_6t^{\frac12}\tau^\vee_4t^{\frac12}\tau^\vee_3t^{\frac12}\tau^\vee_1 
E_{({\color{red}\mu_{10}+1}, \mu_1, {\color{red}\mu_2}, \mu_3, \mu_4, {\color{red}\mu_5}, \mu_6, \mu_8, {\color{red}\mu_7}, {\color{red}\mu_9}, \mu_{11})}
\\
&= t^{5-\#\{\mu_i<\mu_{10}\}} \big(\prod_{k\in \{1,8,11\}} \wt_\mu(C,k)\Big)
t^{\frac12}\tau^\vee_6t^{\frac12}\tau^\vee_4t^{\frac12}\tau^\vee_3
E_{(\mu_1, {\color{red}\mu_{10}+1}, {\color{red}\mu_2}, \mu_3, \mu_4, {\color{red}\mu_5}, \mu_6, \mu_8, {\color{red}\mu_7}, {\color{red}\mu_9}, \mu_{11})}
\\
&= t^{5-\#\{\mu_i<\mu_{10}\}} \big(\prod_{k\in \{1,3, 8,11\}} \wt_\mu(C,k)\Big)
t^{\frac12}\tau^\vee_6t^{\frac12}\tau^\vee_4
E_{(\mu_1, {\color{red}\mu_{10}+1}, \mu_3, {\color{red}\mu_2}, \mu_4, {\color{red}\mu_5}, \mu_6, \mu_8, {\color{red}\mu_7}, {\color{red}\mu_9}, \mu_{11})}
\\
&= t^{5-\#\{\mu_i<\mu_{10}\}} \big(\prod_{k\in \{1,3, 4, 8,11\}} \wt_\mu(C,k)\Big)
t^{\frac12}\tau^\vee_6
E_{(\mu_1, {\color{red}\mu_{10}+1}, \mu_3, \mu_4, {\color{red}\mu_2}, {\color{red}\mu_5}, \mu_6, \mu_8, {\color{red}\mu_7}, {\color{red}\mu_9}, \mu_{11})}
\\
&= t^{5-\#\{\mu_i<\mu_{10}\}} \big(\prod_{k\in \{1,3, 4, 6, 8,11\}} \wt_\mu(C,k)\Big)
E_{(\mu_1, {\color{red}\mu_{10}+1}, \mu_3, \mu_4, {\color{red}\mu_2}, \mu_6, {\color{red}\mu_5}, \mu_8, {\color{red}\mu_7}, {\color{red}\mu_9}, \mu_{11})}
\\
&= t^{5-\#\{\mu_i<\mu_{10}\}} \big(\prod_{k\in C^c} \wt_\mu(C,k)\Big)
E_{\mathrm{rot}_C(\mu)}.
\end{align*} 
The red entries correspond to the parts specified by $C$ which are rotated to get
$\mathrm{rot}_\mu(C)$ as in the picture in 
\eqref{rotmuCpic}.
\qed
\end{example}

\begin{example} \label{invexample2}
\textbf{An example of the computation of $\rho_{D,j}E_\mu$.}  Let $n=11$ and $j=7$ and
$$D = \{1,4,5,7,9\},
\qquad\hbox{so that}\quad
\rho_{D,7} 
= \tau^\vee_7\tau^\vee_9\tau^\vee_{10}
(\tau^\vee_\pi)^{-1} \tau^\vee_2 \tau^\vee_3 \tau^\vee_6$$
and $D^c=\{2,3,6,8,10,11\}$.
Then, using \eqref{tauiaction} and \eqref{taupiaction},
\begin{align*}
t^{\frac62}&\rho_{D,7} E_\mu
= t^{\frac12}\tau^\vee_7t^{\frac12}\tau^\vee_9t^{\frac12}\tau^\vee_{10}
(\tau^\vee_\pi)^{-1} t^{\frac12}\tau^\vee_2 t^{\frac12}\tau^\vee_3 t^{\frac12}\tau^\vee_6
E_{({\color{red}\mu_1}, \mu_2,\mu_3, {\color{red}\mu_4}, {\color{red}\mu_5}, \mu_6, {\color{red}\mu_7}, \mu_8,
{\color{red}\mu_9}, \mu_{10}, \mu_{11})}
\\
&= \mathrm{rwt}_\mu(D,6)
t^{\frac12}\tau^\vee_7t^{\frac12}\tau^\vee_9t^{\frac12}\tau^\vee_{10}
(\tau^\vee_\pi)^{-1} t^{\frac12}\tau^\vee_2 t^{\frac12}\tau^\vee_3
E_{({\color{red}\mu_1}, \mu_2,\mu_3, {\color{red}\mu_4}, {\color{red}\mu_5},  {\color{red}\mu_7}, \mu_6, \mu_8,
{\color{red}\mu_9}, \mu_{10}, \mu_{11})}
\\
&= \mathrm{rwt}_\mu(D,3)\mathrm{rwt}_\mu(D,6)
t^{\frac12}\tau^\vee_7t^{\frac12}\tau^\vee_9t^{\frac12}\tau^\vee_{10}
(\tau^\vee_\pi)^{-1} t^{\frac12}\tau^\vee_2
E_{({\color{red}\mu_1}, \mu_2, {\color{red}\mu_4}, \mu_3, {\color{red}\mu_5},  {\color{red}\mu_7}, \mu_6, \mu_8,
{\color{red}\mu_9}, \mu_{10}, \mu_{11})}
\\
&= \Big(\prod_{k\in \{2,3,6\} } \mathrm{rwt}_\mu(D,k) \Big)
t^{\frac12}\tau^\vee_7t^{\frac12}\tau^\vee_9t^{\frac12}\tau^\vee_{10}
(\tau^\vee_\pi)^{-1}
E_{({\color{red}\mu_1},  {\color{red}\mu_4}, \mu_2, \mu_3, {\color{red}\mu_5},  {\color{red}\mu_7}, \mu_6, \mu_8,
{\color{red}\mu_9}, \mu_{10}, \mu_{11})}
\\
&= t^{-\frac12(11-1)+\#\{\mu_i<\mu_1\}} \Big(\prod_{k\in \{2,3,6\} } \mathrm{rwt}_\mu(D,k) \Big)
t^{\frac12}\tau^\vee_7t^{\frac12}\tau^\vee_9t^{\frac12}\tau^\vee_{10}
E_{(  {\color{red}\mu_4}, \mu_2, \mu_3, {\color{red}\mu_5},  {\color{red}\mu_7}, \mu_6, \mu_8,
{\color{red}\mu_9}, \mu_{10}, \mu_{11}, {\color{red}\mu_1-1})}
\\
&= t^{-5+\#\{\mu_i<\mu_1\}} \Big(\prod_{k\in \{2,3,6,11\} } \mathrm{rwt}_\mu(D,k) \Big)
t^{\frac12}\tau^\vee_7t^{\frac12}\tau^\vee_9
E_{(  {\color{red}\mu_4}, \mu_2, \mu_3, {\color{red}\mu_5},  {\color{red}\mu_7}, \mu_6, \mu_8,
{\color{red}\mu_9}, \mu_{10},  {\color{red}\mu_1-1}, \mu_{11})}
\\
&= t^{-5+\#\{\mu_i<\mu_1\}}\Big(\prod_{k\in \{2,3,6,10,11\} } \mathrm{rwt}_\mu(D,k) \Big)
t^{\frac12}\tau^\vee_7
E_{(  {\color{red}\mu_4}, \mu_2, \mu_3, {\color{red}\mu_5},  {\color{red}\mu_7}, \mu_6, \mu_8,
{\color{red}\mu_9},   {\color{red}\mu_1-1}, \mu_{10}, \mu_{11})}
\\
&= t^{-5+\#\{\mu_i<\mu_1\}}\Big(\prod_{k\in \{2,3,6, 8, 10,11\} } \mathrm{rwt}_\mu(D,k) \Big)
E_{(  {\color{red}\mu_4}, \mu_2, \mu_3, {\color{red}\mu_5},  {\color{red}\mu_7}, \mu_6, 
{\color{red}\mu_9}, \mu_8,  {\color{red}\mu_1-1}, \mu_{10}, \mu_{11})}
\\
&= t^{-5+\#\{\mu_i<\mu_1\}} \big(\prod_{k\in D^c} \mathrm{rwt}_\mu(D,k)\Big)
E_{\mathrm{rrot}_D(\mu)}.
\end{align*}\qed
\end{example}

\section{Specializations of the Monk rule}

Specializations of the electronic Macdonald polynomials at $q=0$, $t=0$, $q=\infty$ and $t=\infty$
are of interest.  For example,
\begin{enumerate}
\item[(a)] $E_\mu(0,t)$ are the Iwahori-spherical functions of \cite{Ion04}, also called 
$t$-deformations of Demazure characters and Demazure atoms in \cite{Al16}; 
\item[(b)] $E_\mu(q,0)$ are (level 1 or level 0) affine Demazure characters,
or (affine) key polynomials, or non-symmetric $q$-Whittaker polynomials
(see \cite{Ion01}, \cite{MRY19} and \cite{AG20}).
\item[(c)] $E_\mu(0,0)$ are Demazure characters or (finite) key polynomials.
The finite key polynomials are special cases of the affine key polynomials.
\end{enumerate}
By appropriately packaging the weights in the Monk formulas in Theorem \ref{Monkthm} it is
easy to specialize these formulas and obtain formulas at $t=0$ and $q=0$.  
Proposition \ref{propIntExp}
does this repackaging for the product $x_jE_\mu$ and the resulting formulas at
$q=0$ and $t=0$ are given in Corollary \ref{spclzdMonk}.
Similar formulas could be given for the other products in Theorem \ref{Monkthm}
and also for specializations at $t=\infty$ and $q=\infty$ (by packaging the coefficients
in terms of $q^{-1}$ and $t^{-1}$ and then setting $q^{-1}=0$ and/or $t^{-1}=0$).

Let $\mu\in \ZZ^n$ and let $j\in \{1, \ldots, n\}$.  Let $C\subseteq \{1, \ldots, n\}$ and let
$$C = \{a_1, a_2, \ldots, a_m\}
\qquad\hbox{with}\quad 1\le a_1< a_2< \cdots <a_m \le n.$$
For parsing the following definitions it is useful to note that
\begin{align*}
&\mu_{a_{i-1}} > \mu_{a_i}
&&\hbox{if and only if}
&&v_\mu(a_{i-1})>v_\mu(a_i), &&\hbox{and} \\
&\mu_{a_1}<\mu_{a_m}+1
&&\hbox{if and only if}
&&v_\mu(a_1)<v_\mu(a_m).
\end{align*}
Assume $j\in C$ and let $p\in \{1, \ldots, m\}$ be given by $j=a_p$.  Let
\begin{align*}
S' &= \#\{ i\in \{2, \ldots, m\}\ |\ \mu_{a_{i-1}}>\mu_{a_i}\} + \begin{cases}
1, &\hbox{if $v_\mu(a_1)>v_\mu(a_m)$,} \\
0, &\hbox{if $v_\mu(a_1)<v_\mu(a_m)$,} 
\end{cases}
\\
A' &= \sum_{i\in \{2, \ldots, m\}\atop \mu_{a_{i-1}}>\mu_{a_i}} (\mu_{a_{i-1}}-\mu_{a_i})
+\begin{cases}
\mu_{a_1}-(\mu_{a_m}+1), &\hbox{ if $\mu_{a_1}\ge \mu_{a_m}+1$,} \\
0, &\hbox{if $\mu_{a_1}<\mu_{a_m}+1$},
\end{cases}
\\
B' &= -\{ i\ |\mu_i>\mu_{a_m}\}
+\{ k\not\in C\ |\ b(k)<\mu_k\}
\\
&\qquad
+\sum_{i\in \{2, \ldots, m\}\atop v_\mu(a_{i-1})>v_\mu(a_i)} (v_\mu(a_{i-1})-v_\mu(a_i))
+\begin{cases}
v_\mu(a_1)-v_\mu(a_m), &\hbox{if $v_\mu(a_1)>v_\mu(a_m)$,} \\
0, &\hbox{if $v_\mu(a_1)<v_\mu(a_m)$.} 
\end{cases}
\end{align*}
Then define
\begin{align}
S_{j,\mu}(C) &= S' + \begin{cases}
1, &\hbox{if $p\ne 1$ and $\mu_{a_p}\ge \mu_{a_{p-1}}$,} \\
0, &\hbox{if $p\ne 1$ and $\mu_{a_p}< \mu_{a_{p-1}}$,} \\
1, &\hbox{if $p=1$ and $\mu_{a_m}+1\le \mu_{a_1}$,} \\
0, &\hbox{if $p=1$ and $\mu_{a_m}+1>\mu_{a_1}$,} 
\end{cases}
\label{Sdefn}
\\
A_{j,\mu}(C) &= A' + \begin{cases}
\mu_{a_{p-1}}-\mu_{a_1}, &\hbox{if $p\ne 1$ and $\mu_{a_p}\ge \mu_{a_{p-1}}$,} \\
\mu_{a_p}-\mu_{a_1}, &\hbox{if $p\ne 1$ and $\mu_{a_p}< \mu_{a_{p-1}}$,} \\
-(\mu_{a_1}-(\mu_{a_m}+1)), &\hbox{if $p=1$ and $\mu_{a_m}+1\le \mu_{a_1}$,} \\
0, &\hbox{if $p=1$ and $\mu_{a_m}+1>\mu_{a_1}$,} 
\end{cases}
\label{Adefn}
\\
B_{j,\mu}(C) &= B' + \begin{cases}
v_\mu(a_{p-1})-v_\mu(a_1), &\hbox{if $p\ne 1$ and $v_\mu(a_{p-1}) < v_\mu(a_p)$,} \\
v_\mu(a_p)-v_\mu(a_1), &\hbox{if $p\ne 1$ and $v_\mu(a_{p-1})>v_\mu(a_p)$,} \\
-(v_\mu(a_1) - v_\mu(a_m)), &\hbox{if $p=1$ and $v_\mu(a_1)>v_\mu(a_p)$,} \\
0, &\hbox{if $p=1$ and $v_\mu(a_1)<v_\mu(a_p)$.}
\end{cases}
\label{Bdefn}
\end{align}

\begin{remark}  The statistics $S_{j,\mu}(C)$, 
$A_{j,\mu}(C)$ and $B_{j,\mu}(C)$ are
interesting statistics on 
$\mu$ and on the permutation $v_\mu$.
What properties do these statistics have?  How do they change when parts of $\mu$ are 
interchanged?
\end{remark}

\begin{prop}  \label{propIntExp}
Let $\mu\in \ZZ^n$ and let $j\in \{1, \ldots, n\}$.  Let $C\subseteq \{1, \ldots, n\}$ and let
$$C = \{a_1, a_2, \ldots, a_m\}
\qquad\hbox{with}\quad 1\le a_1< a_2< \cdots <a_m \le n.$$
Assume $j\in C$ and let $p\in \{1, \ldots, m\}$ be given by $j=a_p$.  
Let 
$$a_0 = a_m, \qquad
\gamma_0 = \mu_{a_m}+1,
\qquad\hbox{and}\qquad
\gamma_i = \mu_{a_i}, \quad\hbox{for $i\in \{1, \ldots, m\}$,}
$$
and define
$$
W_{\mu,k\in C} = \left(\prod_{i\in \{1, \ldots, m\}\atop i\ne p}
\frac{1-t}{1-q^{\vert \gamma_i - \gamma_{i-1}\vert }t^{\vert v_\mu(a_i)-v_\mu(a_{i-1})\vert }}
\right)
$$
For $k\not\in C$ let $b(k)$ and $c(k)$ be as defined in \eqref{bkckdefns} and let
$$
W_{\mu,k\not\in C}
=\left( \prod_{k\not\in C\atop \mu_k>b(k)} 
\frac{(1-q^{\mu_k-b(k)}t^{v_\mu(k)-c(k)+1})(1-q^{\mu_k-b(k)}t^{v_\mu(k)-c(k)-1})}
{(1-q^{\mu_k-b(k)}t^{v_\mu(k)-c(k)})^2}
\right).
$$
Let $S_{j,\mu}(C)$, $A_{j,\mu}(C)$ and $B_{j,\mu}(C)$ be as defined in 
\eqref{Sdefn}, \eqref{Adefn} and \eqref{Bdefn}.
The coefficient of $E_{\mathrm{rot}_C(\mu)}$ in $x_j E_\mu$ is 
$$(-1)^{S_{j,\mu}(C)} q^{A_{j,\mu}(C)} t^{B_{j,\mu}(C)}W_{\mu,k\in C}W_{\mu,k\not\in C}.$$
\end{prop}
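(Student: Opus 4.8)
The starting point is Theorem~\ref{Monkthm}(a), which already expresses the coefficient of $E_{\mathrm{rot}_C(\mu)}$ in $x_jE_\mu$ as $F_\mu(C,j)\wt_\mu(C)$. Thus the whole content of the proposition is the algebraic identity
$$F_\mu(C,j)\,\wt_\mu(C) = (-1)^{S_{j,\mu}(C)}q^{A_{j,\mu}(C)}t^{B_{j,\mu}(C)}W_{\mu,k\in C}W_{\mu,k\not\in C},$$
and the plan is to prove it by regrouping the factors of $\wt_\mu(C)$, absorbing $F_\mu(C,j)$, and keeping separate ledgers for the sign, the power of $q$, the power of $t$, and the surviving rational factors of the form $\tfrac{1-t}{1-q^at^b}$.

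First I would split $\wt_\mu(C)=t^{-\#\{i\,:\,\mu_i>\mu_{a_m}\}}\big(\prod_{k\not\in C}\wt_\mu(C,k)\big)\big(\prod_{k\in C}\wt_\mu(C,k)\big)$. For $k\not\in C$ the factor equals $1$ when $b(k)>\mu_k$ and equals $t$ times the rational weight recorded in $W_{\mu,k\not\in C}$ when $b(k)<\mu_k$, so the outer factors collapse to $t^{\#\{k\not\in C\,:\,b(k)<\mu_k\}}W_{\mu,k\not\in C}$ (any $C$ with $b(k)=\mu_k$ for some $k\not\in C$ has $\wt_\mu(C)=0$ and contributes nothing, so we may assume $b(k)\neq\mu_k$ throughout). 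Together with the prefactor $t^{-\#\{i\,:\,\mu_i>\mu_{a_m}\}}$, these supply the two non-conversion summands of $B'$. For the inner product, set $a_0=a_m$, $\gamma_0=\mu_{a_m}+1$ and $\gamma_i=\mu_{a_i}$, and abbreviate $D_i=1-q^{\gamma_i-\gamma_{i-1}}t^{v_\mu(a_i)-v_\mu(a_{i-1})}$ for $2\le i\le m$, $\widetilde D_1=1-q^{\mu_{a_m}-\mu_{a_1}+1}t^{v_\mu(a_m)-v_\mu(a_1)}$ (the $k=a_m$ denominator), and $\widehat D_i=1-q^{|\gamma_i-\gamma_{i-1}|}t^{|v_\mu(a_i)-v_\mu(a_{i-1})|}$. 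A reindexing of \eqref{wtmuCkdefn} then gives $\prod_{k\in C}\wt_\mu(C,k)=(1-t)^{m-1}\widetilde D_1^{-1}\prod_{i=2}^m D_i^{-1}$, while by definition $W_{\mu,k\in C}=(1-t)^{m-1}\prod_{i\ne p}\widehat D_i^{-1}$.

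The heart of the argument is to convert each of $D_2,\dots,D_m,\widetilde D_1$ into its absolute-value form $\widehat D_i$. Using $1-q^at^b=-q^at^b(1-q^{-a}t^{-b})$ together with the monotonicity fact recorded before \eqref{Sdefn} --- namely $\mu_{a_{i-1}}>\mu_{a_i}$ iff $v_\mu(a_{i-1})>v_\mu(a_i)$, and $\mu_{a_1}\ge\mu_{a_m}+1$ iff $v_\mu(a_1)>v_\mu(a_m)$ --- the $q$-exponent and the $t$-exponent of each denominator change sign in lockstep, so every conversion either fixes $D_i=\widehat D_i$ or replaces $D_i^{-1}$ by $-q^{|\gamma_i-\gamma_{i-1}|}t^{|v_\mu(a_i)-v_\mu(a_{i-1})|}\widehat D_i^{-1}$. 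Tallying these sign flips and exponents over all $m$ denominators produces exactly $(-1)^{S'}q^{A'}$ and the conversion part of $t^{B'}$, and leaves $\prod_{i=1}^m\widehat D_i^{-1}$. The key lemma is that $\widehat D_p$ divides the binomial $F_\mu(C,j)$ with monomial quotient: factoring $F_\mu(C,j)$ shows that $F_\mu(C,j)/\widehat D_p$ equals $-q^{\mu_{a_{p-1}}-\mu_{a_1}}t^{v_\mu(a_{p-1})-v_\mu(a_1)}$ when $\mu_{a_{p-1}}\le\mu_{a_p}$ and $q^{\mu_{a_p}-\mu_{a_1}}t^{v_\mu(a_p)-v_\mu(a_1)}$ when $\mu_{a_{p-1}}>\mu_{a_p}$, with the two analogous subcases when $p=1$ (there $F_\mu(C,j)=\widetilde D_1$, so $F_\mu(C,j)/\widehat D_1$ is $1$ or $-q^{\mu_{a_m}-\mu_{a_1}+1}t^{v_\mu(a_m)-v_\mu(a_1)}$). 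Multiplying by this monomial cancels the $\widehat D_p^{-1}$ factor, leaving precisely $(1-t)^{m-1}\prod_{i\ne p}\widehat D_i^{-1}=W_{\mu,k\in C}$.

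It remains to check that the sign, the $q$-exponent, and the $t$-exponent gathered from the $m$ conversions together with the monomial $F_\mu(C,j)/\widehat D_p$ agree with $S_{j,\mu}(C)$, $A_{j,\mu}(C)$ and $B_{j,\mu}(C)$; this bookkeeping is where I expect the only real difficulty. The point to watch is that the sums defining $S'$, $A'$ and $B'$ run over all of $\{2,\dots,m\}$, including $i=p$, so the conversion of $D_p$ feeds the primed quantities even though $\widehat D_p$ is ultimately cancelled by $F_\mu(C,j)$; one must verify that this $i=p$ contribution combines with $F_\mu(C,j)/\widehat D_p$ to reproduce exactly the case-dependent increments in \eqref{Sdefn}--\eqref{Bdefn}, the relevant telescopings being identities such as $(\mu_{a_{p-1}}-\mu_{a_p})+(\mu_{a_p}-\mu_{a_1})=\mu_{a_{p-1}}-\mu_{a_1}$. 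Running this through the four cases ($p\ne1$ with $\mu_{a_{p-1}}\le\mu_{a_p}$ or $\mu_{a_{p-1}}>\mu_{a_p}$, and $p=1$ with $\mu_{a_1}<\mu_{a_m}+1$ or $\mu_{a_1}\ge\mu_{a_m}+1$) completes the identity. One consistency check to flag: for the $p=1$ branch of \eqref{Bdefn} to align with the corresponding branches of $S_{j,\mu}(C)$ and $A_{j,\mu}(C)$, its conditions should be read as $v_\mu(a_1)>v_\mu(a_m)$ and $v_\mu(a_1)<v_\mu(a_m)$ (that is, with $a_m$ in place of $a_p$).
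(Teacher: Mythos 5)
Your proposal is correct and takes essentially the same route as the paper's own proof: split $\wt_\mu(C)$ into its $k\notin C$ and $k\in C$ parts, convert each denominator $1-q^at^b$ with negative exponents into its absolute-value form (the signs and monomials so produced being exactly $(-1)^{S'}q^{A'}$ and the conversion part of $t^{B'}$), and factor $F_\mu(C,j)$ as a monomial times $1-q^{\vert \gamma_p-\gamma_{p-1}\vert}t^{\vert v_\mu(a_p)-v_\mu(a_{p-1})\vert}$, which cancels the leftover denominator and supplies the case-dependent increments in \eqref{Sdefn}--\eqref{Bdefn}. Your consistency flag on the $p=1$ branch of \eqref{Bdefn} (reading $v_\mu(a_m)$ in place of $v_\mu(a_p)$) is also right; that is a typo in the paper, and your treatment of the boundary cases $b(k)=\mu_k$ and $\mu_{a_{p-1}}=\mu_{a_p}$ is, if anything, slightly more careful than the paper's.
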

\begin{proof}
By \eqref{wtmuCdefn},
$$\prod_{k\not\in C} \wt_\mu(C,k) = t^{\#\{ k\not\in C\ |\ b(k)<\mu_k\}}W_{\mu,k\not\in C}.$$
Let $i\in \{2, \ldots, m\}$ and let $k = a_i$.
If $v_\mu(a_i)<v_\mu(a_{i-1})$ then
$$
\frac{1-t}{1-q^{\mu_{a_i}-\mu_{a_{i-1}}}t^{v_\mu(a_i)-v_\mu(a_{i-1})}}
= \frac{-q^{\mu_{a_{i-1}}-\mu_{a_i}}t^{v_\mu(a_{i-1})-v_\mu(a_i)}(1-t)}
{1-q^{\vert \mu_{a_{i-1}}-\mu_{a_i}\vert}t^{\vert v_\mu(a_{i-1})-v_\mu(a_i)\vert}}
$$
and if $v_\mu(a_m)<v_\mu(a_1)$ then 
$$\frac{1}{1-q^{\mu_{a_m}+1-\mu_{a_1}}t^{v_\mu(a_m)-v_\mu(a_1)}}
=\frac{-q^{\mu_{a_1}-(\mu_{a_m}+1)}t^{v_\mu(a_1)-v_\mu(a_m)}}
{1-q^{\mu_{a_1}-(\mu_{a_m}+1)}t^{v_\mu(a_1)-v_\mu(a_m)}}
=\frac{-q^{\mu_{a_1}-(\mu_{a_m}+1)}t^{v_\mu(a_1) - v_\mu(a_m) }}
{1-q^{\vert\mu_{a_m}+1-\mu_{a_1}\vert}t^{\vert v_\mu(a_m)-v_\mu(a_1)\vert}}.
$$
These give that
$$\prod_{k\in C} \wt_\mu(C,k) = (-1)^{S'+\#\{ i \ |\ \mu_i>\mu_{a_m}\}}q^{A'}t^{B''} W_{\mu,k\in C}\cdot
\frac{1}{1-q^{\vert \gamma_p-\gamma_{p-1}\vert}t^{\vert v_\mu(a_p) - v_\mu(a_{p-1})\vert}}.
$$
So
\begin{align*}
\wt_\mu(C) 
&= t^{-\#\{ i \ |\ \mu_i>\mu_{a_m}\}}\Big(\prod_{k\in C} \wt_\mu(C,k)\Big)
\Big(\prod_{k\not\in C} \wt_\mu(C,k)\Big)
\\
&= (-1)^{S'}q^{A'}t^{B'} W_{\mu,k\in C}W_{\mu,k\not\in C}\cdot
\frac{1}{1-q^{\vert \gamma_p-\gamma_{p-1}\vert}t^{\vert v_\mu(a_p) - v_\mu(a_{p-1})\vert} }.
\end{align*}
If $p\ne 1$ then
\begin{align*}
F_\mu(C,j)
=\begin{cases}
-q^{\mu_{a_{p-1}}-\mu_{a_1}}t^{v_\mu(a_{p-1})-v_\mu(a_1)}
(1-q^{\vert \mu_{a_p}-\mu_{a_{p-1}}\vert }t^{\vert v_\mu(a_p)-v_\mu(a_{p-1})\vert}), 
&\hbox{if $\mu_{a_p}>\mu_{a_{p-1}}$,} \\
q^{\mu_{a_p}-\mu_{a_1}}t^{v_\mu(a_p)-v_\mu(a_1)}
(1-q^{\vert \mu_{a_p}-\mu_{a_{p-1}}\vert }t^{\vert v_\mu(a_p)-v_\mu(a_{p-1})\vert}), 
&\hbox{if $\mu_{a_p}<\mu_{a_{p-1}}$.} 
\end{cases}
\end{align*}
If $p=1$ then
\begin{align*}
F_\mu(C,j)
=\begin{cases}
(1-q^{\vert \mu_{a_m}+1-\mu_{a_1}\vert }t^{\vert v_\mu(a_m)-v_\mu(a_1)\vert}), 
&\hbox{if $\mu_{a_m}+1>\mu_{a_1}$,} \\
-q^{-(\mu_{a_1}-(\mu_{a_m}+1)}t^{-(v_\mu(a_1)-v_\mu(a_m)}
(1-q^{\vert \mu_{a_m}+1-\mu_{a_1}\vert }t^{\vert v_\mu(a_m)-v_\mu(a_1)\vert}), 
&\hbox{if $\mu_{a_m}+1<\mu_{a_1}$.} 
\end{cases}
\end{align*}
Thus
$$F_\mu(C,j)\wt_\mu(C) = (-1)^{S_{\mu,j}(C)}q^{A_{\mu_j}(C)}t^{B_{\mu,j}(C)}
W_{\mu,k\in C}W_{\mu,k\not\in C}$$
and the result now follows from Theorem \ref{Monkthm}(a).
\end{proof}

In order to specialize the coefficients in Proposition \ref{propIntExp} at $q=0$ and $t=0$ it is important to know that
the powers of $q$ and $t$ are nonnegative.  This is established by the following Proposition.

\begin{prop}
Let
$A_{j,\mu}(C)$ and $B_{j,\mu}(C)$ be as defined in 
\eqref{Adefn} and \eqref{Bdefn}.  Then
$$
A_{j,\mu}(C) \ge 0 \qquad\hbox{and}\qquad
B_{j,\mu}(C) \ge 0.
$$
\end{prop}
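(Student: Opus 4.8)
The plan is to prove the two inequalities in parallel, reducing each to one elementary telescoping estimate, and then to confront the single genuinely asymmetric feature: the $t$-only normalization factors $t^{-\#\{i\mid\mu_i>\mu_{a_m}\}}$ and $t^{\#\{k\notin C\mid b(k)<\mu_k\}}$, which enter $B_{j,\mu}(C)$ but have no counterpart in $A_{j,\mu}(C)$.

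For $A_{j,\mu}(C)\ge 0$ I would work with the integers $\gamma_0=\mu_{a_m}+1$ and $\gamma_i=\mu_{a_i}$ of Proposition \ref{propIntExp}, rewriting the quantity $A'$ of \eqref{Adefn} as $A'=\sum_{i=2}^{m}\max(0,\gamma_{i-1}-\gamma_i)+\max(0,\gamma_1-\gamma_0)$. The only tool needed is the telescoping bound $\sum_{i=2}^{p}\max(0,\gamma_{i-1}-\gamma_i)\ge\sum_{i=2}^{p}(\gamma_{i-1}-\gamma_i)=\gamma_1-\gamma_p$, valid for every $2\le p\le m$. When $p\ne 1$ the extra summand in \eqref{Adefn} equals $\gamma_{p-1}-\gamma_1$ or $\gamma_p-\gamma_1$, and applying the bound with right endpoint $p-1$ or $p$ respectively yields $A_{j,\mu}(C)\ge 0$. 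When $p=1$ the extra summand is either $0$ or $-(\gamma_1-\gamma_0)$; in the latter case it cancels the boundary term $\max(0,\gamma_1-\gamma_0)=\gamma_1-\gamma_0$ exactly, leaving the manifestly nonnegative sum $\sum_{i=2}^{m}\max(0,\gamma_{i-1}-\gamma_i)$. This step is unconditional.

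For $B_{j,\mu}(C)\ge 0$ I would first observe that it suffices to treat those $C$ for which $E_{\mathrm{rot}_C(\mu)}$ actually occurs in $x_jE_\mu$, since the estimate is only needed to specialize the coefficients appearing in Corollary \ref{spclzdMonk}. Since $v_\mu$ is injective, $F_\mu(C,j)$ never vanishes identically, so a coefficient is identically zero only when $\wt_\mu(C,k)=0$ for some $k\notin C$, i.e. $b(k)=\mu_k$; hence I may assume $b(k)\ne\mu_k$ for all $k\notin C$, which in particular forbids any tie $\mu_k=\mu_{a_m}$ with $k>a_m$. Writing $w_i=v_\mu(a_i)$ and invoking the two order-equivalences recorded just before Proposition \ref{propIntExp}, the part of \eqref{Bdefn} built from the $v_\mu$-values has exactly the shape of $A_{j,\mu}(C)$ with each $\gamma_i$ replaced by $w_i$ and $\gamma_0$ replaced by $w_m$ (no $+1$ is needed, $v_\mu$ being injective). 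The identical telescoping therefore shows that the order part $B_v:=\sum_{i=2}^{m}\max(0,w_{i-1}-w_i)+\max(0,w_1-w_m)+\delta$, where $\delta$ is the case-defined extra summand of \eqref{Bdefn}, satisfies $B_v\ge 0$.

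The remaining and decisive point is the $t$-only correction, so that the real content of the proposition is the inequality $B_v+\#\{k\notin C\mid b(k)<\mu_k\}\ge\#\{i\mid\mu_i>\mu_{a_m}\}$. Here I would use the rank meaning of $v_\mu$: since no $k>a_m$ has $\mu_k=\mu_{a_m}$, one has $\#\{i\mid\mu_i>\mu_{a_m}\}=\#\{i\mid v_\mu(i)>w_m\}=n-w_m$. Splitting $\{i\mid v_\mu(i)>w_m\}$ into its parts inside and outside $C$, the outside part is matched by $\#\{k\notin C\mid b(k)<\mu_k\}$ on the boundary ranges $k<a_1$ and $k>a_m$ (this is precisely where $b(k)\ne\mu_k$ removes the off-by-one ambiguity at $\mu_k=\mu_{a_m}+1$), while the inside part together with the middle-range discrepancies $a_i<k<a_{i+1}$, where $b(k)$ tests $\mu_k>\mu_{a_i}$ rather than $\mu_k>\mu_{a_m}$, must be absorbed into the telescoped descents of the rank word $w_1,\dots,w_m$ recorded in $B_v$. \textbf{I expect this last reconciliation to be the main obstacle:} one must trade the single global threshold $\mu_{a_m}$ for the local thresholds $\mu_{a_i}$ that are built into $b(k)$, and account for the difference by an explicit injection into the descent factors of $B_v$. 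The cleanest route is probably to construct, position by position, an injection from $\{i\mid v_\mu(i)>w_m\}$ into the multiset of factors contributing to $B_v+\#\{k\notin C\mid b(k)<\mu_k\}$, which would simultaneously reprove $B_v\ge 0$ and the correction inequality in a single stroke.
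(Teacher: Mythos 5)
Your first half is fine: the telescoping bound $\sum_{i=2}^{p}\max(0,\gamma_{i-1}-\gamma_i)\ge \gamma_1-\gamma_p$, applied case by case to \eqref{Adefn}, is exactly the paper's argument for $A_{j,\mu}(C)\ge 0$, and your observation that the rank word $w_i=v_\mu(a_i)$ satisfies the identical estimates (so that the part of \eqref{Bdefn} consisting of the descent sum, the boundary term and the case term --- your $B_v$ --- is nonnegative) is also correct. The gap is that $B_v\ge 0$ is not the statement you need. You need $B_v\ge \Delta$, where $\Delta = \#\{i \mid \mu_i>\mu_{a_m}\} - \#\{k\notin C\mid b(k)<\mu_k\}$, and after correctly reducing $\Delta$ (under your nonvanishing hypothesis) to the count of $i<m$ with $v_\mu(a_i)>v_\mu(a_m)$ plus the gap discrepancies $\#\{a_i<k<a_{i+1}\mid \mu_{a_m}<\mu_k\le\mu_{a_i}\}$, you stop and declare the absorption of these quantities into the descents of $(w_1,\ldots,w_m)$ to be ``the main obstacle,'' to be handled by an injection you do not construct. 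That absorption \emph{is} the second inequality: it can be tight (for $n=3$, $\mu=(0,2,1)$, $C=\{2,3\}$, $j=3$ one gets $B_v=\Delta=1$, so $B_{j,\mu}(C)=0$), so the injection must use every descent, and nothing in your write-up produces it. As it stands, the $B$-half of the proposition is unproved.

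That said, your structural instincts are sharper than the paper's own treatment, which disposes of this half with ``a similar argument.'' The counting terms $-\#\{i\mid\mu_i>\mu_{a_m}\}$ and $+\#\{k\notin C\mid b(k)<\mu_k\}$ in \eqref{Bdefn} have no analogue in \eqref{Adefn}, and in fact the unrestricted statement is false: for $n=2$, $\mu=(1,0)$, $C=\{2\}$, $j=2$ one has $m=1$, so every descent, boundary and case term vanishes, while $\#\{k\notin C\mid b(k)<\mu_k\}=0$ because $b(1)=\mu_{a_m}+1=1=\mu_1$, and $\#\{i\mid\mu_i>\mu_{a_m}\}=1$; hence $B_{j,\mu}(C)=-1$. (This is exactly a vanishing term, $\wt_\mu(C,1)=0$, consistent with $x_2E_{(1,0)}=E_{(1,1)}$ coming entirely from $C=\{1,2\}$, so Corollary \ref{spclzdMonk} is unharmed.) So the hypothesis you impose --- $b(k)\ne\mu_k$ for all $k\notin C$ --- is not a convenience but a necessity, and your reduction to that case (via the fact that $F_\mu(C,j)$ never vanishes identically, by injectivity of $v_\mu$) is sound and is a genuine correction to the statement. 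What is missing is precisely the combinatorial lemma your plan defers; until it is proved, the proposal establishes only the first inequality.
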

\begin{proof}
To keep track of signs, write
\begin{align*}
A_{j,\mu}(C)
&= A' + \begin{cases}
(\mu_{a_{p-1}}-\mu_{a_1}), &\hbox{if $p\ne 1$ and $\mu_{a_1}\le \mu_{a_{p-1}}\le \mu_{a_p}$,} \\
-(\mu_{a_1}-\mu_{a_{p-1}}), &\hbox{if $p\ne 1$ and $\mu_{a_1}>\mu_{a_{p-1}}\le \mu_{a_p}$,} \\
(\mu_{a_{p}}-\mu_{a_1}), &\hbox{if $p\ne 1$ and $\mu_{a_{p-1}}>\mu_{a_p}\ge a_1$,} \\
-(\mu_{a_1}-\mu_{a_p}), &\hbox{if $p\ne 1$ and $\mu_{a_{p-1}}>\mu_{a_p}< \mu_{a_1}$,} \\
-(\mu_{a_1}-(\mu_{a_m}+1)), &\hbox{if $p=1$ and $\mu_{a_m}+1\le \mu_{a_1}$,} \\
0, &\hbox{if $p=1$ and $\mu_{a_m}+1 > \mu_{a_1}$.}
\end{cases}
\end{align*}
Note that $A'\ge 0$ since it is a sum of positive integers.
Let us consider the cases when the term added to $A'$ is negative.

\smallskip\noindent
\emph{Case $\mu_{a_1}>\mu_{a_{p-1}}\le \mu_{a_p}$}: 
Since the total of the descents of the sequence $(\mu_{a_1}, \mu_{a_2}, \ldots, \mu_{a_{p-1}})$
is at least as large as $(\mu_{a_1}-\mu_{a_p})$ then
$$\sum_{i\in \{2,\ldots, p-1\}\atop \mu_{a_{i-1}}>\mu_{a_i}} (\mu_{a_{i-1}}-\mu_{a_i})\ge (\mu_{a_1}-\mu_{a_{p-1}}),
\quad\hbox{so that}\quad
\sum_{i\in \{2,\ldots, p-1\}\atop \mu_{a_{i-1}}>\mu_{a_i}} (\mu_{a_{i-1}}-\mu_{a_i})
-(\mu_{a_1}-\mu_{a_{p-1}}) \ge 0$$
and $A_{j,\mu}(C)\ge 0$.

\smallskip\noindent
\emph{Case $p\ne 1$ and $\mu_{a_{p-1}}>\mu_{a_p}<\mu_{a_1}$}:  
Since the total of the descents of the sequence $(\mu_{a_1}, \mu_{a_2}, \ldots, \mu_{a_p})$
is at least as large as $(\mu_{a_1}-\mu_{a_p})$ then
$$\sum_{i\in \{2,\ldots, p\}\atop \mu_{a_{i-1}}>\mu_{a_i}} (\mu_{a_{i-1}}-\mu_{a_i})
\ge (\mu_{a_1}-\mu_{a_p})
\qquad\hbox{so that}\qquad
A_{j,\mu}(C) = A' - (\mu_{a_1}-\mu_{a_p}) \ge 0$$
and $A_{j,\mu}(C)\ge 0$. 

\smallskip\noindent
\emph{Case $p=1$ and $\mu_{a_m}+1>\mu_{a_1}$}:  In this case the last term in the definition of $A'$
cancels with the added extra term $-(\mu_{a_1}-(\mu_{a_m}+1))$ so that $A_{j,\mu}(C)$ is a sum of positive
integers and is $\ge 0$.

A similar argument shows shows that $B_{j,\mu}(C)\ge 0$.
\end{proof}

Now we are ready to specialize the result of Proposition \ref{propIntExp} at $q=0$ and $t=0$.

\begin{cor} \label{spclzdMonk}
Keep the same notations as in Propostion \ref{propIntExp}.
\item[] \qquad \emph{(a)}  If $t=0$ then\qquad
$\displaystyle{ x_jE_\mu = \sum_{C\subseteq\{1, \ldots, n\}\atop B_{j,\mu}(C)=0} 
(-1)^{S_{j,\mu}(C)} q^{A_{j,\mu}(C)} E_{\mathrm{rot}_C(\mu)}. }$
\item[] \qquad \emph{(b)} If $q=0$ then \qquad
$$x_jE_\mu = \sum_{C\subseteq\{1, \ldots, n\}\atop A_{j,\mu}(C)=0} 
(-1)^{S_{j,\mu}(C)} t^{B_{j,\mu}(C)} (1-t)^{m-1} 
\Big(\prod_{\gamma_i=\gamma_{i-1}\atop i\ne p} \frac{1}{1-t^{v_\mu(a_i)-v_\mu(a_{i-1})}}\Big)
E_{\mathrm{rot}_C(\mu)}.
$$
\item[]\qquad \emph{(c)} If $q=0$ and $t=0$ then\qquad
$\displaystyle{ x_j E_\mu = \sum_{C\subseteq\{1,\ldots, n\}\atop A_{j,\mu}(C)=0, B_{j,\mu}(C)=0} (-1)^{S_{j,\mu}(C)} 
E_{\mathrm{rot}_C(\mu)}. }$
\end{cor}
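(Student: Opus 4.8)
The plan is to start from the closed form for the coefficients already in hand and simply specialize. By Proposition \ref{propIntExp} the coefficient of $E_{\mathrm{rot}_C(\mu)}$ in $x_jE_\mu$ is $(-1)^{S_{j,\mu}(C)}q^{A_{j,\mu}(C)}t^{B_{j,\mu}(C)}W_{\mu,k\in C}W_{\mu,k\notin C}$, and by the preceding Proposition the exponents $A_{j,\mu}(C)$ and $B_{j,\mu}(C)$ are both nonnegative. Nonnegativity is what makes the monomial prefactor $q^{A_{j,\mu}(C)}t^{B_{j,\mu}(C)}$ an honest polynomial rather than a Laurent monomial; together with control of the residual factors $W_{\mu,k\in C}W_{\mu,k\notin C}$ it will ensure that each coefficient has a well-defined value under the specialization. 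Since the specialized families $\{E_\nu(0,0)\}$, $\{E_\nu(q,0)\}$ and $\{E_\nu(0,t)\}$ are again bases of the relevant polynomial ring, it suffices to specialize each coefficient separately and discard those that vanish; the surviving coefficients are then read off termwise.

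I would prove part (c) first, as it is the cleanest. Setting $q=0$ and $t=0$, the prefactor $q^{A_{j,\mu}(C)}t^{B_{j,\mu}(C)}$ survives exactly when $A_{j,\mu}(C)=B_{j,\mu}(C)=0$, which pins down the index set of the sum. On such terms both residual factors tend to $1$: in $W_{\mu,k\in C}$ each factor $\frac{1-t}{1-q^{|\gamma_i-\gamma_{i-1}|}t^{|v_\mu(a_i)-v_\mu(a_{i-1})|}}$ tends to $\frac{1}{1}=1$, and in $W_{\mu,k\notin C}$ each factor tends to $1$ because the relevant $q$-exponent $d=\mu_k-b(k)$ is strictly positive on the range of the product, so every occurrence of $q^{d}$ dies. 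This yields the coefficient $(-1)^{S_{j,\mu}(C)}$. For part (b) I would set only $q=0$: now $q^{A_{j,\mu}(C)}$ selects the terms with $A_{j,\mu}(C)=0$, again $W_{\mu,k\notin C}\to1$ since $d>0$, but $W_{\mu,k\in C}$ no longer collapses completely. A factor indexed by $i\ne p$ with $\gamma_i\ne\gamma_{i-1}$ tends to $1-t$, while a factor with $\gamma_i=\gamma_{i-1}$ tends to $\frac{1-t}{1-t^{|v_\mu(a_i)-v_\mu(a_{i-1})|}}$; collecting the $m-1$ factors gives exactly the displayed coefficient $(-1)^{S_{j,\mu}(C)}t^{B_{j,\mu}(C)}(1-t)^{m-1}\prod_{\gamma_i=\gamma_{i-1},\,i\ne p}\frac{1}{1-t^{v_\mu(a_i)-v_\mu(a_{i-1})}}$, where injectivity of $v_\mu$ guarantees the $t$-exponents are nonzero.

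The remaining and genuinely delicate case is part (a), the specialization $t=0$, which I expect to be the main obstacle. Here $t^{B_{j,\mu}(C)}$ selects $B_{j,\mu}(C)=0$, and $W_{\mu,k\in C}\to1$ as in (c), since the denominator $t$-exponents $|v_\mu(a_i)-v_\mu(a_{i-1})|$ are $\ge1$ by injectivity of $v_\mu$. What must still be shown is that $W_{\mu,k\notin C}\to1$ on the surviving terms, and this is not automatic: a factor $\frac{(1-q^{d}t^{e+1})(1-q^{d}t^{e-1})}{(1-q^{d}t^{e})^{2}}$ with $e=v_\mu(k)-c(k)$ is regular and tends to $1$ only when $e\ge2$, whereas small or nonpositive values of $e$ produce either a nontrivial constant or a pole that would have to be absorbed by the vanishing order of $t^{B_{j,\mu}(C)}$.

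The crux is therefore to rerun the sign bookkeeping used to prove $B_{j,\mu}(C)\ge0$ in the preceding Proposition and to show that the constraint $B_{j,\mu}(C)=0$ forces each exponent $e=v_\mu(k)-c(k)$ (for $k\notin C$ with $\mu_k>b(k)$) into the range where the corresponding factor of $W_{\mu,k\notin C}$ is regular at $t=0$ and evaluates to $1$. This interplay between the order of vanishing $t^{B_{j,\mu}(C)}$ and the residual rational factor is the delicate point, and it is where I would spend the most care; once it is pinned down the coefficient reduces to $(-1)^{S_{j,\mu}(C)}q^{A_{j,\mu}(C)}$, and as a consistency check part (c) can be recovered from (a) by then sending $q\to0$ (equivalently from (b) by sending $t\to0$, using $(1-t)^{m-1}\prod\frac{1}{1-t^{\cdots}}\to1$).
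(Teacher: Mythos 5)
Your strategy is the same as the paper's own (implicit) argument: the paper states Corollary \ref{spclzdMonk} with no separate proof, precisely as the specialization of the coefficient $(-1)^{S_{j,\mu}(C)}q^{A_{j,\mu}(C)}t^{B_{j,\mu}(C)}W_{\mu,k\in C}W_{\mu,k\not\in C}$ of Proposition \ref{propIntExp}, using the nonnegativity of $A_{j,\mu}(C)$ and $B_{j,\mu}(C)$. Your parts (b) and (c) are complete and correct: at $q=0$ every factor of $W_{\mu,k\not\in C}$ equals $1$ because each carries the strictly positive power $q^{\mu_k-b(k)}$, while $W_{\mu,k\in C}$ produces exactly the displayed product; at $q=t=0$ all residual factors are $1$.

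The step you leave open in part (a) is a genuine gap, and it cannot be closed: the claim you identify as needed --- that $B_{j,\mu}(C)=0$ forces $W_{\mu,k\not\in C}\vert_{t=0}=1$ --- is false. (Your worry about poles does not arise: from \eqref{bkckdefns} one checks that $v_\mu(k)-c(k)\ge 1$ whenever $k\not\in C$ and $\mu_k>b(k)$; the obstruction is exactly the case $v_\mu(k)-c(k)=1$, where the factor specializes to $1-q^{\mu_k-b(k)}$.) Concretely, take $n=2$, $\mu=(0,1)$, $j=1$, $C=\{1\}$, so $m=1$, $p=1$, $v_\mu=\mathrm{id}$, $\mathrm{rot}_C(\mu)=(1,1)$. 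Then $S_{1,\mu}(C)=A_{1,\mu}(C)=0$ and $B_{1,\mu}(C)=-\#\{i\mid \mu_i>\mu_1\}+\#\{k\not\in C\mid b(k)<\mu_k\}=-1+1=0$ (the remaining terms in \eqref{Bdefn} vanish), but $k=2$ has $\mu_2-b(2)=1$ and $v_\mu(2)-c(2)=1$, so
$$W_{\mu,k\not\in C}=\frac{(1-qt^2)(1-q)}{(1-qt)^2},\qquad W_{\mu,k\not\in C}\Big\vert_{t=0}=1-q\ne 1.$$
The discrepancy is visible directly: at $t=0$ one has $E_{(0,1)}=x_1+x_2$, $E_{(1,1)}=x_1x_2$ and $E_{(2,0)}=x_1^2+qx_1x_2$ (the last, e.g., by applying Theorem \ref{Monkthm}(a) to $x_1E_{(1,0)}=x_1^2$), whence
$$x_1E_{(0,1)}=x_1^2+x_1x_2=(1-q)\,E_{(1,1)}+E_{(2,0)},$$
so the coefficient of $E_{(1,1)}$ is $1-q$, whereas part (a) predicts $(-1)^0q^0=1$. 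Thus part (a) as printed is itself in error, and what the specialization argument actually yields at $t=0$ is
$$x_jE_\mu=\sum_{C\ni j\atop B_{j,\mu}(C)=0}(-1)^{S_{j,\mu}(C)}q^{A_{j,\mu}(C)}
\Big(\prod_{k\not\in C,\ \mu_k>b(k)\atop v_\mu(k)-c(k)=1}\big(1-q^{\mu_k-b(k)}\big)\Big)E_{\mathrm{rot}_C(\mu)},$$
whose extra factor becomes $1$ at $q=0$, so parts (b) and (c) are unaffected. Your instinct that the $t=0$ case is the delicate one was exactly right; the correct conclusion of your argument is this amended formula (after also checking, as above, that $W_{\mu,k\in C}\vert_{t=0}=1$ and that no poles occur), not part (a) as stated.
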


\end{document}